\documentclass[10pt, a4paper]{amsart}
\usepackage{amssymb, amsmath, amsfonts, mathrsfs, amsthm}
\usepackage[utf8]{inputenc}
\usepackage{lmodern}
\usepackage[T1]{fontenc}
\usepackage[a4paper,includeheadfoot,margin=2.54cm]{geometry}

\DeclareMathOperator{\Exc}{\mathrm{Exc}}

\DeclareMathOperator{\LL}{\mathscr{L}}

\DeclareMathOperator{\OO}{\mathscr{O}}
\DeclareMathOperator{\EE}{\mathscr{E}_{\mathbf{a}}}
\DeclareMathOperator{\sing}{\mathrm{sing}}
\DeclareMathOperator{\Sing}{\mathrm{Sing}}
\DeclareMathOperator{\Gk}{\mathbb{G}_k}
\DeclareMathOperator{\Gpk}{\mathbb{G}^{\prime}_k}
\DeclareMathOperator{\reg}{\mathrm{reg}}
\DeclareMathOperator{\Sym}{\mathrm{Sym}}

\begin{document} 

\title{On negativity of total $k$-jet curvature and ampleness of the canonical bundle}
\date{}
\author{Aleksei Golota}

\theoremstyle{definition}

\newtheorem{thm}{Theorem}[subsection]
\newtheorem{defi}{Definition}[subsection]
\newtheorem{prop}{Proposition}[subsection]
\newtheorem{rmk}{Remark}[subsection]
\newtheorem{cor}{Corollary}[subsection]
\newtheorem{conj}{Conjecture}
\newtheorem{que}{Question}[subsection]
\newtheorem{assum}{Assumption}[subsection]

\begin{abstract}
A celebrated conjecture of Kobayashi and Lang says that the canonical line bundle $K_X$ of a Kobayashi hyperbolic compact complex manifold $X$ is ample. In this note we prove that $K_X$ is ample if $X$ is projective and satisfies a stronger condition of nondegenerate negative total $k$-jet curvature. The main idea is to apply recent results on positivity of direct image sheaves to the Demailly-Semple tower in order to produce pluridifferentials on $X$.
\end{abstract}

\maketitle

\section{Introduction}

A complex manifold $X$ is called {\em Kobayashi hyperbolic} if  an intrinsic pseudometric $d_K$, introduced by S. Kobayashi (see \cite{Kob70}, IV. 1) is a metric on $X$. Assume from now on that $X$ is compact, for instance projective. Thanks to the Brody criterion \cite{Brd78}, Kobayashi hyperbolicity of $X$ is equivalent to non-existence of {\em entire curves}, i. e. nonconstant holomorphic maps $f \colon \mathbb{C} \to X$. 

One the most interesting and challenging tasks in complex geometry is to describe the precise relations between Kobayashi hyperbolicity and birational geometry of $X$. Recall the two celebrated conjectures.

\begin{conj}\label{conj:Kob}{(S. Kobayashi, \cite{Kob70}, S. Lang \cite{Lan86})} Let $X$ be a Kobayashi hyperbolic compact complex manifold. Then the canonical line bundle $K_X$ is ample.
\end{conj}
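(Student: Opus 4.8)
I restrict to the case where $X$ is projective, which is the essential case; reducing the general compact complex statement to it (via projectivity or Moishezon-ness of hyperbolic manifolds) is a delicate point that I set aside. The plan is to run the two standard steps of birational geometry --- first show $K_X$ is nef, then upgrade nef to ample --- using Kobayashi hyperbolicity at each stage to exclude the degenerate configurations that would otherwise appear.

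The first step, nefness of $K_X$, is unconditional. Any rational curve $C \subset X$ would give a nonconstant morphism $\mathbb{P}^1 \to C \hookrightarrow X$, and restricting it to $\mathbb{C} \subset \mathbb{P}^1$ would produce an entire curve, contradicting hyperbolicity through the Brody criterion. Hence $X$ carries no rational curves. By the Cone Theorem every $K_X$-negative extremal ray of $\overline{NE}(X)$ is spanned by the class of a rational curve, so the absence of such curves forces $\overline{NE}(X)$ to lie entirely in $\{\,\alpha : K_X \cdot \alpha \geq 0\,\}$; that is, $K_X$ is nef.

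For the second step I would invoke the Nakai--Moishezon criterion: since $K_X$ is nef, it is ample exactly when $(K_X|_Y)^{\dim Y} > 0$ for every irreducible subvariety $Y \subseteq X$ (for a curve $Y = C$ this is simply $K_X \cdot C > 0$). Each such $Y$ is again Kobayashi hyperbolic, because hyperbolicity passes to complex subspaces, and $K_X|_Y$ is a nef class on $Y$. The entire problem thus reduces to a single positivity statement: \emph{for every hyperbolic subvariety the nef restriction of $K_X$ is big}, i.e. has strictly positive top self-intersection. To extract this bigness from hyperbolicity I would follow the Green--Griffiths--Lang strategy signalled in the abstract, manufacturing global jet differentials by applying positivity of direct image sheaves along the Demailly--Semple tower and using the nonexistence of entire curves to bound pluricanonical sections from below.

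The main obstacle is precisely this last implication --- deducing the bigness of $K_X$, equivalently the existence of enough jet differentials vanishing along an ample divisor, from \emph{bare} Kobayashi hyperbolicity. This is the crux of the Green--Griffiths--Lang program and is not available in full generality: hyperbolicity is an analytic condition on the Kobayashi pseudometric and does not by itself supply the algebraic positivity of the jet bundles that the direct-image machinery needs. Bridging exactly this gap is what forces the quantitative strengthening of the hypothesis (nondegenerate negative total $k$-jet curvature), under which the required differentials are produced and the program above can be completed.
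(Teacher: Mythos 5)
The statement you were asked to prove is Conjecture~\ref{conj:Kob}, which is \emph{open}: the paper contains no proof of it, and only establishes the much weaker Theorem~\ref{thm:Main}, in which bare hyperbolicity is replaced by the hypothesis that $\OO_{X_k}(1)$ is big with degeneration set contained in $X_k^{\sing}$ (nondegenerate negative total $k$-jet curvature). Your outline is an honest non-proof that correctly diagnoses this, and its skeleton matches the paper's proof of the weaker theorem step for step: nefness of $K_X$ from the absence of rational curves plus the Cone Theorem (your version, via Brody, is in fact valid under bare hyperbolicity for projective $X$, whereas the paper derives it from Proposition~\ref{prop:Curves} under its stronger hypothesis); then bigness of $K_X$; then ampleness via the base-point-free theorem or Takayama's Theorem~\ref{thm:Takayama} together with the absence of rational curves --- your Nakai--Moishezon reduction over all hyperbolic subvarieties is a mild variant of this last step and is also fine.

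The irreducible gap is exactly where you place it: step 2, extracting bigness of $K_X$ (equivalently, enough invariant jet differentials vanishing along an ample divisor, which feed into Proposition~\ref{prop:Big} via the filtration of Proposition~\ref{prop:Decomp} and the Campana--P\u{a}un criterion, Theorem~\ref{thm:CampanaPaun}) from hyperbolicity alone. But one can be more precise than ``not available'': the route you sketch --- manufacturing jet differentials from nonexistence of entire curves via positivity of direct images along the Demailly--Semple tower --- is known to be obstructed in this form. Demailly's examples (\cite{Dem97}, Theorem 8.2) give, for every $k_0$, hyperbolic surfaces admitting no $k$-jet negative metric for $k \leqslant k_0$, and Diverio--Rousseau \cite{DR15} show that for hyperbolic varieties as simple as products $C_1 \times \cdots \times C_n$ of genus $\geqslant 2$ curves the intersection of the base loci of jet differentials dominates $X$; the paper cites exactly this in the introduction to explain why ``new methods have to be found'' even for the weaker Question~\ref{que:KJet}. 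So the direct-image machinery genuinely requires the curvature hypothesis as input rather than producing it, and your proposal, while structurally aligned with the paper, stops where the paper itself stops.
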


\begin{conj}\label{conj:GGL}{(M. Green -- P. Griffiths, S. Lang \cite{GG80, Lan86})} Let $X$ be a variety of general type. Then there exists a proper subvariety $Z \subsetneq X$ such that for every entire curve $f \colon \mathbb{C} \to X$ the image $f(\mathbb{C})$ lies in $Z$.
\end{conj}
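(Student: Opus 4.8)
The plan is to follow the jet-differential strategy of Green--Griffiths and Demailly, in which the subvariety $Z$ emerges as the projection of the base locus of a well-chosen linear system of jet differentials. First I would fix an ample line bundle $A$ on $X$ and consider, for integers $k, m$, the Demailly--Semple bundle $E_{k,m}T_X^*$ of invariant jet differentials of order $k$ and weighted degree $m$, whose sections are polynomial differential operators acting on germs of holomorphic curves. The foundational input is the \emph{fundamental vanishing theorem}: any global section $P \in H^0(X, E_{k,m}T_X^* \otimes A^{-1})$ vanishes identically along every entire curve, i.e. $P(f', \dots, f^{(k)}) \equiv 0$ for all holomorphic $f \colon \mathbb{C} \to X$. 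This is an Ahlfors--Schwarz/Brody-type statement: the pullback by $f$ of such a twisted jet differential is an entire function whose growth is incompatible with Liouville's theorem unless it vanishes.

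The second step is to produce many such sections, and this is exactly where the method announced in the abstract applies. Since $X$ is of general type, $K_X$ is big, and I would transport this positivity up the Demailly--Semple tower $X_k \to X_{k-1} \to \dots \to X_0 = X$. Using the identification $(\pi_{0,k})_* \OO_{X_k}(m) \cong E_{k,m}T_X^*$ between direct images of the tautological bundles and the jet bundles, recent positivity results for direct image sheaves (equivalently, Demailly's algebraic holomorphic Morse inequalities) give a lower bound for $h^0\bigl(X, E_{k,m}T_X^* \otimes A^{-1}\bigr)$ growing like a positive multiple of $m^{\dim X_k}$ once $k$ and $m$ are large. In particular the space of twisted jet differentials is nonzero, so the jet lift $f_{[k]} \colon \mathbb{C} \to X_k$ of every entire curve is forced into the common zero locus of these sections.

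The final and decisive step is to pass from these differential equations to genuine algebraic degeneracy, and this is where I expect the main obstacle. Setting $Z$ to be the image in $X$ of the base locus $Z_k \subset X_k$ of the twisted tautological system, the fundamental vanishing theorem guarantees $f(\mathbb{C}) = \pi_{0,k}(f_{[k]}(\mathbb{C})) \subseteq \pi_{0,k}(Z_k)$ for every entire curve; the whole content of the conjecture is then the assertion that $\pi_{0,k}(Z_k) \subsetneq X$. A single jet differential only constrains $f$ by one equation and cannot by itself bound the fibre directions, so I would enlarge the system by differentiating a given section $P$ along global meromorphic vector fields on the jet space --- the slanted vector fields of Siu, Merker and Demailly --- which produce new jet differentials with values in $A^{-1}$ twisted by a controlled pole divisor. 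The hard part is to establish enough such vector fields, with pole order small relative to the vanishing order of $P$ along $A$, that the resulting family of equations cuts $Z_k$ down until its projection is proper; controlling this interplay between the positivity needed for many sections and the global generation needed for the degeneracy argument is precisely the difficulty that keeps the conjecture open in full generality.
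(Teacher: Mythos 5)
This statement is the Green--Griffiths--Lang conjecture, which the paper states purely as motivation: it is an open problem, the paper offers no proof of it, and nothing in the paper claims otherwise (the paper's own Theorem \ref{thm:Main} concerns the different, stronger hypothesis of nondegenerate negative total $k$-jet curvature). So there is no proof in the paper to compare yours against, and your proposal --- as you yourself concede in its final sentences --- is a strategy outline with an unresolved step, not a proof. Your first two steps are sound and standard: the fundamental vanishing theorem (Ahlfors--Schwarz, see \cite{Dem97}, Corollary 7.9) does force $f_{[k]}(\mathbb{C})$ into the base locus $Z_k$ of twisted jet differentials, and for $X$ of general type Demailly's Morse-inequality results do produce many sections of $E_{k,m}T_X^* \otimes A^{-1}$ for $k, m \gg 0$ (the paper quotes this as \cite{Dem12}, Corollary 15.74). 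The genuine gap is exactly where you place it, but it is worse than ``hard'': it is known that the strategy of cutting down $Z_k$ by jet differentials alone \emph{cannot} work in general. The paper itself points this out, citing Diverio--Rousseau \cite{DR15}: for many varieties of general type --- e.g.\ products $C_1 \times \cdots \times C_n$ of curves of genus $\geqslant 2$, which are even hyperbolic --- the intersection of the base loci of all invariant jet differentials of all orders dominates $X$, so $\pi_{0,k}(Z_k) = X$ for every $k$ and your candidate $Z$ is never proper.

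Your proposed remedy, differentiating along slanted meromorphic vector fields in the spirit of Siu, Merker and Demailly, does not repair this for an abstract $X$: that technique relies on the jet spaces of a \emph{universal family} of hypersurfaces (or complete intersections) carrying many global meromorphic vector fields of low pole order, a feature of the ambient projective space that has no analogue for a general variety of general type. This is why effective algebraic degeneracy is known for generic high-degree hypersurfaces \cite{DMR10} but the conjecture of Green--Griffiths--Lang \cite{GG80, Lan86} remains open, and why the paper concludes that ``new methods have to be found.'' In short: your write-up is an accurate survey of the state of the art, but it does not prove the statement, and no argument along these lines alone can.
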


We refer an interested reader to e. g. \cite{GG80, McQ98, DMR10} as well as to expository works \cite{Bru99, Dem12, Siu04, Voi03} for the progress towards the Green-Griffiths-Lang conjecture. As for Conjecture \ref{conj:Kob}, it is obviously true in dimension one and follows from Kodaira-Enriques classification in dimension two. A possible strategy (which works in dimension $3$) is to argue by contradiction using the Iitaka fibration and (conjectural) non-hyperbolicity of Calabi-Yau varieties. In higher dimensions, however, both the abundance conjecture and non-hyperbolicity of Calabi-Yau varieties seem to be at present beyond reach. Thus a natural attempt would be to derive ampleness of $K_X$ under stronger assumptions on $X$. For example, it is classically known \cite{GRe65} that if $X$ has a Hermitian (or, more generally, a Finsler) metric with negative holomorphic sectional curvature then $X$ is Kobayashi hyperbolic. In this case we have a major result established recently in \cite{WY16, WY16b, DT16}.

\begin{thm}\label{thm:WuYau}{(D. Wu -- S. T. Yau)} If $X$ carries a K\"ahler metric with (quasi-)negative holomorphic sectional curvature then $K_X$ is ample.
\end{thm}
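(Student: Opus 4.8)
The statement is the Wu--Yau theorem, and I would prove it by the continuity method, the goal being to manufacture a genuine Kähler--Einstein metric of negative curvature. Since $K_X$ is ample exactly when the class $c_1(K_X)=-c_1(X)$ contains a Kähler form, it suffices to produce a Kähler metric $\omega_0$ with $\mathrm{Ric}(\omega_0)=-\omega_0$: its Ricci form then represents $c_1(K_X)$ and exhibits $K_X$ as ample. To reach such a metric I would deform along the twisted path $\mathrm{Ric}(\omega_t)=-\omega_t+t\,\omega$ for $t>0$, where $\omega$ is the given metric of (quasi-)negative holomorphic sectional curvature. In cohomology $[\omega_t]=c_1(K_X)+t[\omega]$, which is a Kähler class for $t$ large, so the corresponding twisted complex Monge--Ampère equation is solvable there by the Aubin--Yau theorem (the sign $-\omega_t$ is the favourable one, mimicking the ample case). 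The plan is then to continue the solution all the way down to $t\to 0^{+}$ and take the limit.

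Openness of the set of admissible $t$ is routine: writing the equation in Monge--Ampère form for a potential $u_t$, its linearisation is $\Delta_{\omega_t}-\mathrm{id}$, which has trivial kernel by the maximum principle and is therefore invertible, so the implicit function theorem applies. Closedness reduces to uniform a priori estimates as $t\to 0^{+}$. The $C^{0}$-estimate for $u_t$ follows from the maximum principle evaluated at the extrema of $u_t$, and once a two-sided bound $C^{-1}\omega\le\omega_t\le C\omega$ is established the higher-order estimates are supplied by the Evans--Krylov and Schauder theory in the standard way. Everything thus hinges on the second-order estimate controlling $\mathrm{tr}_{\omega_t}\omega$ from above uniformly in $t$, the complementary lower bound on $\mathrm{tr}_{\omega_t}\omega$ coming for free from the trace of the equation.

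The heart of the argument, and the step I expect to be the main obstacle, is precisely this Laplacian estimate, where the curvature hypothesis must finally be used. Applying the Chern--Lu (Schwarz lemma) computation to the identity map $(X,\omega_t)\to(X,\omega)$ yields a differential inequality of the shape
\[
\Delta_{\omega_t}\log\mathrm{tr}_{\omega_t}\omega \;\ge\; -C\,\mathrm{tr}_{\omega_t}\omega \;+\; \frac{1}{\mathrm{tr}_{\omega_t}\omega}\,\bigl(\text{bisectional curvature of }\omega\bigr),
\]
in which the only dangerous contribution is a quadratic expression in the full curvature tensor of the target metric $\omega$. The subtlety is that negative holomorphic sectional curvature controls a priori only the diagonal components $R_{i\bar i i\bar i}$, not this full expression; the decisive input is Royden's polarisation lemma, which shows that a pointwise bound on the holomorphic sectional curvature already forces the precise sign needed to absorb the curvature term. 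Combining this with the maximum principle gives the desired bound on $\mathrm{tr}_{\omega_t}\omega$, hence the uniform equivalence $C^{-1}\omega\le\omega_t\le C\omega$; letting $t\to 0^{+}$ then produces a smooth Kähler metric in $c_1(K_X)$ and establishes ampleness. In the quasi-negative case the estimate degenerates along the locus where the holomorphic sectional curvature vanishes, so this direct route only yields $\mathrm{Ric}(\omega_\infty)\le 0$ and the nefness of $K_X$; I would then upgrade nef to ample by using that strict negativity on a nonempty open set forces $\int_X c_1(K_X)^n>0$, together with a further argument ruling out the vanishing of $c_1(K_X)^{\dim Z}\cdot Z$ on proper subvarieties $Z$, so that the limiting class is genuinely Kähler.
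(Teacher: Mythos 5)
This theorem is not proved in the paper: it is quoted as a known result of Wu--Yau (with the quasi-negative case due to Diverio--Trapani) and used as a black box, so there is no in-paper argument to compare against. Your sketch is, however, a faithful reconstruction of the actual published proof: the continuity path $\mathrm{Ric}(\omega_t)=-\omega_t+t\omega$ solved by Aubin--Yau for large $t$, openness from invertibility of $\Delta-\mathrm{id}$, and the second-order estimate via Chern--Lu plus Royden's polarisation lemma as the step where negative holomorphic sectional curvature enters. Two points deserve tightening. First, the sign bookkeeping in your displayed inequality is off: in the actual computation the term coming from $\mathrm{Ric}(\omega_t)\ge-\omega_t$ contributes only a bounded quantity (roughly $-1$), while Royden's lemma makes the target-curvature term contribute $+\tfrac{(n+1)\kappa}{2n}\,\mathrm{tr}_{\omega_t}\omega$; if the competing term really were $-C\,\mathrm{tr}_{\omega_t}\omega$ with unspecified $C$, the maximum principle would give nothing. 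One should also note the uniformity issue in the $C^0$ estimate as $t\to0^+$, since the reference class $-c_1(X)+t[\omega]$ may degenerate before one knows $-c_1(X)$ is Kähler. Second, the quasi-negative endgame is genuinely harder than you indicate: the limit argument yields only that $K_X$ is nef with $\int_Xc_1(K_X)^n>0$, hence big and nef, and the passage to ampleness (emptiness of the null locus) is a substantive separate argument in Diverio--Trapani, not a routine afterthought. With those caveats, the architecture of your proposal is the correct and standard one.
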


Note that negativity of the holomorphic sectional curvature is a strictly stronger assumption than hyperbolicity. On the other hand, we can generalize it in the following way. The K\"ahler metric $\omega$ induces a smooth Hermitian metric on the hyperplane line bundle $\OO_{\mathbb{P}(T_X)}(1)$ over the projectivized bundle $\pi \colon \mathbb{P}(T_X) \to X$ of {\em lines} in $T_X$. Negativity of the holomorphic sectional curvature $\Theta_{\omega}(v\otimes v)$ is equivalent to negativity of the curvature form $ \Theta_{h_1}(\OO_{\mathbb{P}(T_X)}(1))$ along the subbundle $V_1 \subset T_{\mathbb{P}(T_X)}$ of vectors projected to the line $\OO_{\mathbb{P}(T_X)}(-1)$ by $\pi_*$. We can proceed inductively starting from $(X_0, V_0) = (X, T_X)$ and construct the $k$-th stage of {\em the Demailly -- Semple tower} by putting $$ (X_k, V_k) := \left(\mathbb{P}(V_{k-1}), (\pi_{k-1,k})_*^{- 1}\OO_{\mathbb{P}(V_{k-1})}( - 1)\right)$$ where $\pi_{k-1, k} \colon X_k \to X_{k-1}$ is the projection. The varieties $X_k$ are endowed with hyperplane line bundles $\OO_{X_k}(1)$. We say that $(X, T_X)$ has {\em negative $k$-jet curvature} if there exists a (possibly singular) Hermitian metric $h_k$ on $\OO_{X_k}(1)$ such that $$ \Theta_{h_k}(\OO_{X_k}(1))|_{V_k} \geqslant \varepsilon \omega|_{V_k} $$ in the sense of currents for some $\varepsilon > 0$ and a metric $\omega$ on $X_k$. It can be shown that $k$-jet negativity becomes weaker as $k$ increases and together with {\em nondegeneracy} condition (see Definition \ref{defi:KJet} below) implies Kobayashi hyperbolicity. The examples in \cite{Dem97}, Theorem 8.2 show that for every $k_0 \in \mathbb{N}$ there are Kobayashi hyperbolic surfaces which do not admit a $k$-jet negative metric for $k \leqslant k_0$. Nevertheless, the following has a chance to be true.

\begin{conj}{(J.-P. Demailly \cite{Dem97})} The variety $X$ is Kobayashi hyperbolic if and only if there exists $k \in \mathbb{N}$ such that $(X, T_X)$ has negative $k$-jet curvature.
\end{conj}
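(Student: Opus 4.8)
The plan is to treat the two implications of the biconditional separately, since they are of very different depth. The implication \emph{from negativity to hyperbolicity} --- that negative $k$-jet curvature, together with the nondegeneracy built into Definition \ref{defi:KJet}, forces $X$ to be hyperbolic --- is accessible and proceeds along the Ahlfors--Schwarz philosophy. The converse implication, \emph{from hyperbolicity to negativity}, is the genuine obstacle: as the examples of \cite{Dem97}, Theorem 8.2 show, there can be no uniform bound on the order $k$, so the statement cannot follow from any single estimate, and this is ultimately why it remains a conjecture.

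For the accessible implication I would exploit the functoriality of the Demailly--Semple tower. Every holomorphic map $f \colon \mathbb{C} \to X$ lifts canonically to a curve $f_{[k]} \colon \mathbb{C} \to X_k$ obtained by recording its successive derivatives, and by construction the lift is tangent to the subbundle $V_k$, that is $f'_{[k]}(t) \in V_{k}$ at $f_{[k]}(t)$ for all $t$. Furthermore the projected derivative spans the tautological line $\OO_{X_k}( - 1)$ at $f_{[k]}(t)$, so the derivative of the $(k-1)$-st lift is naturally a holomorphic section of $f_{[k]}^{*}\OO_{X_k}( - 1)$. Pulling back the hypothetical metric $h_k$ along $f_{[k]}$ and using that its curvature dominates $\varepsilon\,\omega$ precisely \emph{along $V_k$}, which are the directions tangent to the lifted curve, the bundle $f_{[k]}^{*}\OO_{X_k}(1)$ acquires curvature bounded below by a positive multiple of the pulled-back metric. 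The generalized Ahlfors--Schwarz lemma in the form used by Demailly for jet metrics then yields a differential inequality for the pullback pseudometric that bounds the derivative of $f$; a Liouville-type argument over the whole of $\mathbb{C}$ forces $f$ to be constant as soon as the lifted curve is not trapped in the degeneracy locus, and nondegeneracy guarantees that no entire curve is entirely absorbed there. Hence $X$ admits no nonconstant entire curve, and by the Brody criterion \cite{Brd78} the compact manifold $X$ is Kobayashi hyperbolic.

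The converse is where essentially all the difficulty lies, and here I can only propose a strategy. Following the Green--Griffiths--Lang heuristic that hyperbolic manifolds should be rich in jet differentials, I would try to manufacture the metric $h_k$ on $\OO_{X_k}(1)$ out of the Kobayashi--Royden infinitesimal pseudometric, which under hyperbolicity is a genuine metric; after lifting to $X_k$ and invoking the tautological identifications one hopes to control its curvature along $V_k$ for $k$ sufficiently large. The first obstacle is regularity: the Kobayashi--Royden metric is in general only upper semicontinuous, so its curvature is not literally defined, and one must regularize it without destroying the strict negativity in the relevant jet directions. The second and, I expect, decisive obstacle is the impossibility of a fixed $k$: by \cite{Dem97}, Theorem 8.2 there exist hyperbolic surfaces carrying no $k$-jet negative metric for $k \leqslant k_0$, for every prescribed $k_0$, so the order must be allowed to grow with the geometric complexity of $X$ and the construction cannot rest on a single universal curvature bound. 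What would be needed is an effective passage from the purely qualitative absence of entire curves to a quantitative curvature estimate on some finite stage of the tower, and no such mechanism is presently available.
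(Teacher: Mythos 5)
The statement you were asked about is a conjecture attributed to Demailly; the paper does not prove it and does not claim to, so there is no proof of the full biconditional to compare yours against. Your overall assessment is therefore the correct one: the implication from negative $k$-jet curvature to hyperbolicity is the accessible half and is exactly the content of the Ahlfors--Schwarz-type proposition the paper quotes from \cite{Dem97}, Theorem 7.8 (every entire curve $f$ has $f_{[k]}(\mathbb{C}) \subset \Sigma_{h_k}$, hence is excluded when the degeneration set is under control), while the converse is genuinely open, and you rightly refrain from claiming it.

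Two remarks on your sketch of the accessible direction. First, nondegeneracy is \emph{not} ``built into'' Definition \ref{defi:KJet}: the paper distinguishes plain negative $k$-jet curvature from \emph{nondegenerate} negative $k$-jet curvature, and the Ahlfors--Schwarz argument with the weaker hypothesis only traps $f_{[k]}(\mathbb{C})$ inside $\Sigma_{h_k}$, not inside $X_k^{\sing}$; to conclude hyperbolicity you need the extra hypothesis $\Sigma_{h_k} \subset X_k^{\sing}$ together with the observation that the lifting of a nonconstant curve is generically regular and hence cannot lie entirely in the vertical divisor $X_k^{\sing}$. As literally stated in the paper the ``if'' direction of the conjecture omits nondegeneracy, so either the statement is to be read with that convention understood or your argument proves slightly less than what is asserted; this is worth flagging rather than glossing. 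Second, your proposed strategy for the converse (regularizing the Kobayashi--Royden infinitesimal metric and lifting it to the tower) is a reasonable heuristic, and your identification of the two obstructions --- the mere upper semicontinuity of that metric, and the impossibility of a uniform $k$ coming from \cite{Dem97}, Theorem 8.2 --- is accurate; but it remains a strategy, not an argument, and you are right that no passage from qualitative absence of entire curves to a curvature estimate on a finite stage of the tower is currently known.
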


Therefore it is plausible to expect the following generalization of Theorem \ref{thm:WuYau}.

\begin{que}\label{que:KJet} Assume that $(X, T_X)$ has nondegenerate negative $k$-jet curvature for some $k \in \mathbb{N}$. Is the canonical line bundle $K_X$ ample?
\end{que}

Observe that in \ref{que:KJet} the curvature of $\OO_{X_k}(1)$ is only {\em partially} positive, so it does not {\em a priori} say anything about the existence of sections. The methods of \cite{WY16, WY16b} also do not seem to generalize directly to the case $k > 1$. Nevertheless, we are able to answer the Question \ref{que:KJet} under a strong (in fact, strongest possible, see \cite{Dem97}, Theorem 6.8 (iii)) positivity assumption on the curvature of $\OO_{X_k}(1)$. Our main theorem states as follows.

\begin{thm}\label{thm:Main} Assume that $(X, T_X)$ has nondegenerate negative {\em total} $k$-jet curvature, i. e. the line bundle $\OO_{X_k}(1)$ is big on $X_k$ and the singularity set $\Sigma_{h_k}$ is contained in $X_k^{\sing} \subset X_k$ (see Definition \ref{defi:KJet}). Then the canonical line bundle $K_X$ is ample.
\end{thm}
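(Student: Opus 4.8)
The plan is to exploit the Demailly--Semple tower to turn bigness of $\OO_{X_k}(1)$ into bigness of $K_X$, and then to upgrade bigness to ampleness using Kobayashi hyperbolicity together with Nakamaye's description of the augmented base locus. Write $r = \dimn X$ and $f = \pi_{0,k} \colon X_k \to X$, and recall Demailly's identification $f_{*}\OO_{X_k}(m) = E_{k,m}T_X^{*}$, the sheaf of invariant jet differentials of order $k$ and weighted degree $m$. Two tautological facts drive the mechanism. First, since $E_{k,m}T_X^{*}$ is built functorially from $T_X^{*}$ by tensor and symmetric powers and invariant subsheaves, its determinant is a fixed positive power of $\det T_X^{*}$, i.e. $\det E_{k,m}T_X^{*} = K_X^{\otimes p_{k,m}}$ with $p_{k,m} > 0$ computable from the filtration. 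Second, the relative Euler sequences of the tower give $\det V_j = \OO_{X_j}(r-1) \otimes \pi_{j-1,j}^{*}\det V_{j-1}$, hence formulas expressing $K_{X_k/X}$ and $\pi_{0,k}^{*}K_X$ in terms of the bundles $\OO_{X_j}(1)$; in particular, for $m \gg 0$ the bundle $L_m := \OO_{X_k}(m) \otimes K_{X_k/X}^{-1}$ is again big and inherits a singular metric of semipositive curvature from $h_k$.

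The \emph{heart} of the argument is the step producing pluridifferentials. Here I would invoke the recent positivity theorems for direct images (Berndtsson, P\u{a}un--Takayama, Cao) in the singular-metric setting: writing $\OO_{X_k}(m) = K_{X_k/X} \otimes L_m$, the hypothesis that $\OO_{X_k}(1)$ is big on $X_k$ and that $\Sigma_{h_k} \subseteq X_k^{\sing}$ guarantees that the induced metric is smooth with strictly positive curvature on a dense open subset of the generic fibre of $f$, since a generic $k$-jet is regular and therefore lies in $X_k^{\reg}$. Consequently $f_{*}\OO_{X_k}(m) = E_{k,m}T_X^{*}$ carries a singular Hermitian metric with Griffiths-semipositive curvature that is strictly positive at a generic point of $X$ in every direction, so its determinant $K_X^{\otimes p_{k,m}}$ is big. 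Thus $K_X$ is big and $X$ is of general type. Running the same argument over an arbitrary positive-dimensional subvariety $Z \subseteq X$, using that the negative total $k$-jet curvature restricts to $Z$, shows that $K_X|_Z$ is big for every such $Z$.

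It remains to pass from big to \emph{ample}. Since $(X, T_X)$ has nondegenerate negative $k$-jet curvature, $X$ is Kobayashi hyperbolic and in particular contains no rational curves; by the Cone Theorem every $K_X$-negative extremal ray would be spanned by a rational curve, so $\overline{NE}(X) = \overline{NE}(X)_{K_X \geq 0}$ and $K_X$ is nef. A nef and big canonical bundle whose restriction to every subvariety is big has $(K_X|_Z)^{\dimn Z} > 0$ for all $Z$, so its null locus is empty; by Nakamaye's theorem the augmented base locus $\mathbf{B}_{+}(K_X)$ is empty, which is equivalent to ampleness of $K_X$.

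I expect the main obstacle to be the positivity-of-direct-images step: one must verify that $h_k$ descends to a metric on $E_{k,m}T_X^{*}$ whose determinant is \emph{genuinely} big rather than merely pseudoeffective. This requires controlling the multiplier ideals of $h_k$ along the generic fibre and checking that $\Sigma_{h_k} \subseteq X_k^{\sing}$ really forces the strict (not just semi-) positivity needed for maximal variation. A secondary delicate point is the restriction of the jet-curvature hypothesis to subvarieties $Z$, since the Demailly--Semple tower of $Z$ is not the restriction of that of $X$; this must be arranged carefully so that Nakamaye's criterion applies.
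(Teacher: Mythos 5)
Your overall architecture (nef via absence of rational curves, big via positivity extracted from the Demailly--Semple tower, ample via emptiness of the null locus) matches the paper's, but the two decisive steps are left as programs rather than proofs, and in both cases the gap is real. For bigness of $K_X$, you propose to run Berndtsson--P\u{a}un--Takayama positivity of direct images on $(\pi_{0,k})_*\OO_{X_k}(m) = E_{k,m}T_X^*$ and to deduce that $\det E_{k,m}T_X^*$, a positive power of $K_X$, is big. As you yourself note, the theorems in that circle give a Griffiths \emph{semi}positive singular metric on $(\pi_{0,k})_*\bigl(K_{X_k/X}\otimes L_m \otimes \mathcal{I}(h)\bigr)$, and upgrading this to bigness of the determinant requires strict positivity statements (plus control of the multiplier ideal along the generic fibre, and of the discrepancy between the $\mathcal{I}(h)$-twisted direct image and $E_{k,m}T_X^*$) that you do not supply; ``strictly positive at a generic point in every direction'' is precisely what has to be proved, not something that follows formally from $\Sigma_{h_k}\subset X_k^{\sing}$. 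The paper sidesteps all of this: it first gets $K_X$ nef (hence pseudoeffective) from the curve inequality of Proposition \ref{prop:Curves}, then uses the Kodaira lemma on the big bundle $\OO_{X_k}(\mathbf{a})$ together with the direct image formula of Proposition \ref{prop:Compact} and the filtration of Proposition \ref{prop:Decomp} to manufacture a sheaf injection of an ample line bundle $A$ into $(\Omega^1_X)^{\otimes N}$, and concludes by the Campana--P\u{a}un criterion (Theorem \ref{thm:CampanaPaun}). That is a complete argument resting on a precise published theorem; your direct-image route is a plausible alternative in the spirit of the paper's abstract, but it is not a proof as written.

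The second gap is in the passage from big and nef to ample. You want to apply Nakamaye's theorem by showing $(K_X|_Z)^{\dim Z}>0$ for every positive-dimensional subvariety $Z$, and for that you propose to ``run the same argument over $Z$''. This step fails as stated: $Z$ may be singular, the Demailly--Semple tower of $(Z,T_Z)$ sits inside $X_k$ only as a proper subvariety of the restricted tower, and bigness of $\OO_{X_k}(1)$ on $X_k$ does not restrict to bigness on a subvariety --- the non-nef locus of $\OO_{X_k}(1)$ could well contain $Z_k$. You flag this yourself as a ``delicate point'', but it is not a technicality to be arranged later; it is the whole difficulty, and no fix is indicated. The paper avoids the restriction problem entirely: the null locus of the big and nef $K_X$ (equivalently the exceptional locus of the semiample contraction, or the non-ample locus via Takayama's Theorem \ref{thm:Takayama}) is covered by rational curves, and Proposition \ref{prop:Curves} rules out all rational curves on $X$. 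Replacing your Nakamaye step with this uniruledness argument, which uses only the hyperbolicity you have already established in the nefness step, closes that part of your proof.
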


Recent works \cite{BD15, Bro16, Den17} show that there exist many examples of varieties satisfying the aforementioned condition, such that general hypersurfaces $H_d \subset X$ of high degree and general complete intersections $H_{d_1, \ldots, d_c} \subset X$ of at least $c \geqslant \lceil n/2 \rceil$ high-degree hypersurfaces. Moreover, a remarkable result of Demailly (see e.g \cite{Dem12}, Corollary 15.74) asserts that for every variety of general type there exist $m, k \in \mathbb{N}$ and an ample line bundle $A$ such that $\OO_{X_k}(m) \otimes (- \frac{m}{kn}(1 + \frac12 + \ldots + \frac1k)A)$ is big, so that $(X, T_X)$ has negative total $k$-jet curvature. We give a converse to this theorem (see Proposition \ref{prop:Big}). Note also that from the results of \cite{DR15} it follows that for many hyperbolic varieties (for example, products $C_1 \times \cdots \times C_n$ of genus $g \geqslant 2$ curves) the intersection of the base loci of jet differentials dominate $X$. Therefore the new methods have to be found to answer the Question \ref{que:KJet} in full generality.

The paper is organized as follows. In Section \ref{se:Jets} we recall basic definitions and results on jet bundles, jet differentials and curvature. The proof of Theorem \ref{thm:Main} is given in Section \ref{se:Proof}. Finally, in Section \ref{se:Concl} we discuss versions of Theorem \ref{thm:Main} in the case of a log-pair $(X, D)$ and in the case of singular directed variety $(X, V)$.

\emph{Acknowledgement.} The author is grateful to J.-P.Demailly for posing Question \ref{que:KJet} and for his advice, to Simone Diverio for his valuable explanations and for pointing out the reference \cite{CamP15}. Also the author thanks the organizers of the conference "Complex Analytic and Differential Geometry" in honor of J.-P.Demailly, for stimulating athmosphere. Last but not least, the author is grateful to Misha Verbitsky for his support.

\section{Jet bundles and curvature}\label{se:Jets}

In this section we introduce the definitions and basic results on jet bundles and jet differentials following \cite{Dem97}. 

\subsection{Jets of curves and jet differentials}

 A {\em directed manifold} is a pair $(X, V)$ where $X$ is a compact complex manifold of dimension $n$ and $V$ is a linear subspace of generic rank $r$ of the tangent bundle $T_X$. In fact, below we will need only the {\em absolute} case, so the reader can suppose that $V = T_X$ and $r = \mathrm{rk}(T_X) = n$. We consider the bundle $p_k \colon J_kV \to X$ of $k$-jets of parameterized curves in $X$. It is the set of equivalence classes of holomorphic maps $f \colon (\mathbb{C}, 0) \to (X, x)$ tangent to $V$ with $f \sim g$ if and only if all derivatives $f^{(j)}(0) = g^{(j)}(0)$ for $0 \leqslant j \leqslant k$ coincide when computed in some local coordinate chart near $x$. The projection map is defined by $p_k([f]) = f(0)$. Choose local holomorphic coordinates $(z_1, \ldots ,z_n)$ on an open set $\Omega \subset X$; then for every $x \in \Omega$ the fibers $J_kV_x$ can be seen as $\mathbb{C}^r$-valued maps $$f = (f_1, \ldots, f_r) \colon (\mathbb{C}, 0) \to \Omega \subset X$$ that are determined by their Taylor expansion at $t = 0$ $$ f(t) = x +tf'(0) + \frac{t^2}{2!}f''(0) + \ldots + \frac{t^k}{k!}f^{(k)}(0) + O(t^{k + 1}).$$ Thus in these coordinates the fibers are parameterized by $$ \left((f'_1(0), \ldots ,f'_r(0)); (f''_1(0), \ldots ,f''_r(0)); \ldots; (f^{(k)}_1(0), \ldots ,f^{(k)}_r(0))\right) \in (\mathbb{C}^{r})^{k}$$ so that $J_kV$ is a locally trivial $\mathbb{C}^{rk}$-bundle over $X$. We denote by $J_kV^{\reg}$ the open subset of {\em regular} $k$-jets $$J_kV^{\reg} = \{ [f] \in J_kV \mid f'(0) \neq 0\}.$$ We define $\Gk$ to be the group of germs of biholomorphic maps $\varphi \colon (\mathbb{C}, 0) \to (\mathbb{C}, 0)$ given by $$ t \mapsto \varphi(t) = a_1t + a_2t^2 + \ldots + a_kt^k, \qquad a_1 \in \mathbb{C}^*, a_i \in \mathbb{C}, 2 \leqslant i \leqslant k$$ with the composition taken modulo terms $t^i$ for $i > k$. Then $\Gk$ is a $k$-dimensional nilpotent complex Lie group, which admits a fiberwise action on $J_kV$ (and on $J_kV^{\reg}$) by change of parameter $(f, \varphi) \to f \circ \varphi$. There is an exact sequence $$ 1 \to \Gpk \to \Gk \to \mathbb{C}^* \to 1$$ the map $\Gk \to \mathbb{C}^*$ being $\varphi \mapsto \varphi'(0)$. The corresponding $\mathbb{C}^*$-action is simply the weighted action $$\lambda \cdot (f', \ldots ,f^{(k)}) = (\lambda f', \lambda^2f'', \ldots , \lambda^kf^{(k)}).$$ Notice also that $\Gk$ has a representation in $GL_k(\mathbb{C})$ given in the above basis by 
$$\varphi(t) = a_1t + \ldots + a_kt^k \longmapsto \begin{pmatrix} 
a_1 & a_2 & a_3 & \ldots & a_k \\
0 & a^2_1 & 2a_1a_2 & \ldots & 2a_1a_{k-1} + \cdots \\
0 & 0 & a^3_1 & \ldots & 3a^2_1a_{k-2} + \cdots \\
0 & 0 & 0 & \ldots & \cdot \\
\cdot & \cdot & \cdot & \ldots & \cdot \\
0 & 0 & 0 & \ldots & a^k_1
\end{pmatrix} \in GL_k(\mathbb{C})$$ the $(i, j)$-th entry of the matrix being $$ p_{ij} = \sum_{l_1 + l_2 + \ldots + l_i = j}a_{l_1}a_{l_2} \cdots a_{l_i}.$$ We are interested in polynomial functions $P(f', \ldots ,f^{(k)})$ of weighted degree $l_1 + 2l_2 + \ldots + kl_k = m$ on fibers of $J_kV$ invariant under the $\Gk$-action: $$ P\left((f \circ \varphi)', (f \circ \varphi)'', \ldots , (f \circ \varphi)^{(k)}\right) = \varphi'(0)^mP(f', f'', \ldots ,f^{(k)}).$$ These functions are called {\em invariant jet differentials of order $k$ and degree $m$} and form a vector bundle $E_{k, m}V^* \to X$ (\cite{Dem97}, Definition 6.7). Invariant jet differentials can be interpreted as sections of line bundles on a certain compactification of $J_kV^{\reg} / \Gk$; this compactification is introduced in the next subsection.

\subsection{Projectivized jet bundles}

In this subsection $(X, V)$ is a directed manifold with $ r = \mathrm{rk}(V) \geqslant 2$.

\begin{defi} The {\em $k$-th (stage of) the Demailly -- Semple tower} of a directed manifold $(X, V)$ is a directed manifold $(X_k, V_k)$ defined inductively by $(X_0, V_0) := (X, V)$ and $$ (X_k, V_k) := \left(\mathbb{P}(V_{k-1}), (\pi_{k-1,k})_*^{-1}\OO_{X_k}(- 1)\right)$$ where $\pi_{k-1,k} \colon X_k \to X_{k-1}$ is the projection, $(\pi_{k-1,k})_* = d\pi_{k-1,k} \colon T_{X_k} \to (\pi_{k-1,k})^{-1}T_{X_{k-1}}$ its differential and $(\pi_{k-1,k})_*^{-1}\OO_{X_k}(- 1)$ is a linear subspace of $T_{X_k}$ which can be defined pointwise by $$ V_k = (\pi_{k-1,k})_*^{-1}\OO_{X_k}(- 1) = \{ \xi \in T_{X_k, (x, [v])} \mid (\pi_{k-1,k})_*\xi \in \mathbb{C}v\}. $$ 
\end{defi}

We get for every $k \in \mathbb{N}$ a tower of $\mathbb{P}^r$-bundles over $X$ with $\dim(X_k) = n + k(r - 1)$ and $\mathrm{rk}(V_k) = r$. For all pair of indices $0 \leqslant j \leqslant k$ we have natural morphisms $\pi_{j,k} \colon X_k \to X_j$ and their differentials $$(\pi_{j,k})_* = (d\pi_{j,k})|_{V_k}\colon V_k \to (\pi_{j,k})^{-1}V_j.$$ The manifolds $X_k$ carry hyperplane line bundles $\OO_{X_k}(1)$. We introduce the following notation for weighted line bundles: $$ \OO_{X_k}(\mathbf{a}) := \bigotimes^{k}_{i = 1}\pi^*_{i,k}\OO_{X_i}(a_i) \qquad \mathbf{a} = (a_1, \ldots ,a_k) \in \mathbb{Z}^k.$$ For every germ $f \colon \mathbb{C} \to (X, V)$ of holomorphic curves we can define inductively the $k$-th lifting $$ f_{[k]}(t) := (f_{[k-1]}(t), [f'_{[k-1]}(t)]) $$ of $f \colon \mathbb{C} \to (X, V)$ to $f_{[k]} \colon \mathbb{C} \to (X_k, V_k)$. Denote by $X^{\reg}_k$ the set of points of $X_k$ which can be reached by liftings of regular germs of curves. We have the following result (\cite{Dem97}, Theorem 6.8).

\begin{prop}\label{prop:Compact} Let $(X, V)$ be a directed manifold, $r = \mathrm{rk}(V) \geqslant 2$. Consider the bundle $J_kV^{\reg}$ of regular $k$-jets of curves $f \colon \mathbb{C} \to X$. Then there exists a holomorphic embedding $J_kV^{\reg}/\Gk \hookrightarrow X_k$ over $X$ which identifies $J_kV^{\reg}/\Gk$ with $X^{\reg}$. In other words, the manifold $X_k$ is a relative compactification of $J_kV^{\reg}/\Gk$ over $X$ and $X^{\sing}_k = X_k \setminus X^{\reg}_k$ is a vertical divisor in $X_k$. Moreover, we have a direct image formula $$ (\pi_{0,k})_*\OO_{X_k}(m) \simeq E_{k,m}V^*$$ and for every line bundle $L \to X$ we have an identification $$ H^0(X_k, \OO_{X_k}(m) \otimes \pi_{0,k}^{*}L) = H^0(X, E_{k,m}V^* \otimes L).$$ More precisely, for every $\mathbf{a} = (a_1, \ldots , a_n) \in \mathbb{Z}^k$ we have $$ (\pi_{0,k})_*\OO_{X_k}(\mathbf{a}) = \OO(F_{\mathbf{a}}E_{k,m}V^*) $$ where $F_{\mathbf{a}}E_{k,m}V^*$ is the subbundle of polynomials $P(f', f'', \ldots, f^{(k)}) \in E_{k,m}V^*$ involving only monomials $(f^{(\bullet)})^l$ such that $$ l_{s +1} + 2l_{s+2} + \ldots + (k-s)l_{k} \leqslant a_{s+1} + \ldots + a_{k} $$ for every $0 \leqslant s \leqslant k -1$.
\end{prop}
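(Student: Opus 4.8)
The plan is to argue by induction on $k$, the base case $k=0$ being the tautology $(X_0, V_0) = (X, V)$. For the inductive step I would first construct the embedding and the identification with $X^{\reg}_k$, then establish the two direct image formulas. The starting point is to make the $k$-th lifting precise in local coordinates and to check that $[f] \mapsto f_{[k]}(0)$ depends only on the $\Gk$-orbit of the regular jet $[f] \in J_kV^{\reg}$: reparametrizing $f$ by $\varphi \in \Gk$ rescales each lifted derivative $f'_{[j]}$ by the nonzero scalar $\varphi'(0)$, hence leaves the projective direction $[f'_{[j]}] \in \mathbb{P}(V_{j-1})$ unchanged. This yields a holomorphic map $J_kV^{\reg}/\Gk \to X_k$ over $X$. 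To see it is an embedding I would write down the inverse on $X^{\reg}_k$: a point there records, stage by stage, the osculating flag of a regular curve, and reading off the successive tangent directions reconstructs the jet up to reparametrization. The dimension bookkeeping $\dim X_k = \dim X_{k-1} + (r-1) = n + k(r-1)$ and $\mathrm{rk}(V_k) = r$ is immediate from the projective-bundle construction and confirms that the map identifies $J_kV^{\reg}/\Gk$ with a Zariski-open $X^{\reg}_k \subset X_k$.

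Next I would analyze the complement $X^{\sing}_k = X_k \setminus X^{\reg}_k$, which consists precisely of the jets that become stationary at some stage, i.e. where a lifted derivative $f'_{[j]}$ vanishes. In the local trivialization of $X_j = \mathbb{P}(V_{j-1})$ this is the zero locus of a single section, so $X^{\sing}_k$ is a divisor, and it is contracted by the appropriate projection, hence \emph{vertical}. Concretely I would assemble it inductively as $X^{\sing}_k = \pi_{k-1,k}^{-1}(X^{\sing}_{k-1}) \cup D_k$, where $D_k \subset \mathbb{P}(V_{k-1})$ is the locus over which the lifted curve acquires a vanishing derivative.

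For the push-forward I would iterate the relative projective-bundle formula. Writing $\OO_{X_k}(\mathbf{a}) = \OO_{X_k}(a_k) \otimes \pi_{k-1,k}^*\OO_{X_{k-1}}(a_1, \ldots, a_{k-1})$ and using the projection formula gives
$$(\pi_{k-1,k})_*\OO_{X_k}(\mathbf{a}) = \Sym^{a_k}V_{k-1}^* \otimes \OO_{X_{k-1}}(a_1, \ldots, a_{k-1}),$$
and one pushes down further by $(\pi_{0,k-1})_*$ invoking the inductive hypothesis. The weighted $\mathbb{C}^*$-action $(\lambda f', \ldots, \lambda^k f^{(k)})$ matches the grading by total degree $m = \sum_j j\,l_j$, so a fiberwise section of $\OO_{X_k}(m)$ is exactly a weighted-homogeneous $\Gk$-invariant polynomial in $f', \ldots, f^{(k)}$, i.e. an element of $E_{k,m}V^*$, giving $(\pi_{0,k})_*\OO_{X_k}(m) \simeq E_{k,m}V^*$; tensoring by $L$ and taking global sections then yields the displayed identification of $H^0$'s. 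For the refined statement I would track, through each tautological sequence
$$0 \to \OO_{X_j}(-1) \to \pi_{j-1,j}^*V_{j-1} \to Q_j \to 0,$$
which monomials $(f^{(\bullet)})^{l}$ survive the weight constraint imposed by $a_j$ at the $j$-th stage; accumulating the resulting inequalities produces exactly $l_{s+1} + 2l_{s+2} + \cdots + (k-s)l_k \leq a_{s+1} + \cdots + a_k$ for every $s$, i.e. the subbundle $F_{\mathbf{a}}E_{k,m}V^*$.

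The delicate point is analytic: a priori an invariant jet differential defines a section of $\OO_{X_k}(m)$ only over $X^{\reg}_k$, and one must show it extends holomorphically across the vertical divisor $X^{\sing}_k$ without poles. I would handle this by a local computation in the blow-up coordinates along $D_k$, showing that reparametrization-invariance forces the apparent poles to cancel, combined with normality of $X_k$ and Hartogs extension across the codimension-one locus. The second, more bookkeeping-heavy obstacle is verifying that the filtration inequalities compose correctly through all $k$ stages; this is exactly where the precise form of the $\Gk$-representation encoded by the matrix $(p_{ij})$ is used.
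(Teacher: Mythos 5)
First, a remark on the comparison itself: the paper does not prove this proposition at all --- it is quoted from Demailly (\cite{Dem97}, Theorem 6.8 and \S 12) as background --- so your sketch is in effect a reconstruction of Demailly's original argument. Its overall architecture is the right one: functoriality of liftings under reparametrization (so that $f_{[k]}(0)$ depends only on the $\Gk$-orbit), the inductive description of $X_k^{\sing}$ as a union of vertical divisors cut out by the maps $\OO_{X_j}(-1) \to \pi_{j-1,j}^*\OO_{X_{j-1}}(-1)$, and the identification of sections of $\OO_{X_k}(m)$ with weighted-homogeneous $\Gk$-invariant functions on $J_kV^{\reg}$.

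Two steps as written would not go through, however. First, your extension argument is miscalibrated. There are two distinct extension problems, one for each direction of $(\pi_{0,k})_*\OO_{X_k}(m) \simeq E_{k,m}V^*$: a section of $\OO_{X_k}(m)$ yields an invariant function on $J_kV^{\reg}$, and extending it to $J_kV$ uses Riemann extension across $\{f'(0)=0\}$, which has codimension $r$ in the fibres --- this is exactly where the hypothesis $r \geqslant 2$ enters, and you never invoke it; conversely, a polynomial $P \in E_{k,m}V^*$ yields a section only over $X_k^{\reg}$, and the locus $X_k^{\sing}$ to be crossed is a \emph{divisor}, so ``Hartogs extension across the codimension-one locus'' is not available --- there you need local boundedness (first Riemann extension theorem) or the explicit pole-cancellation computation you allude to. You have conflated these two loci. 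Second, your induction for the weighted pushforward does not close: the formula $(\pi_{k-1,k})_*\OO_{X_k}(\mathbf{a}) = \Sym^{a_k}V_{k-1}^* \otimes \OO_{X_{k-1}}(a_1, \ldots, a_{k-1})$ is correct, but $\Sym^{a_k}V_{k-1}^*$ is not of the form $\OO_{X_{k-1}}(\mathbf{a}')$, so the inductive hypothesis says nothing about its further pushforward under $\pi_{0,k-1}$. The filtration statement is instead obtained by computing, for each monomial $(f')^{l_1}\cdots(f^{(k)})^{l_k}$, its order of vanishing along each vertical divisor $\pi_{j,k}^{-1}(D_j)$ and demanding nonnegativity; that computation, not an iteration of the relative Euler sequences, is what produces the inequalities $l_{s+1} + 2l_{s+2} + \ldots + (k-s)l_k \leqslant a_{s+1} + \ldots + a_k$.
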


We obtain a filtration $F_{\mathbf{a}}$ on $E_{k,m}V^*$ with the associated graded object described in \cite{Dem97}, \textsection 12.

\begin{prop}\label{prop:Decomp} The graded object of the above filtration on $E_{k,m}V^*$ is a direct sum of irreducible $GL(V)$-representations and is isomorphic to $$ \mathrm{Gr}^{\bullet}(E_{k,m}V^*) = \left(\bigoplus_{l \in \mathbb{N}^k,  l_1 + 2l_2 + \ldots + kl_k = m}S^{l_1}V^* \otimes S^{l_2}V^* \otimes \cdots \otimes S^{l_k}V^*\right)^{\Gpk}.$$
\end{prop}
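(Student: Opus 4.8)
The statement is fibrewise, so I would fix $x\in X$, an isomorphism $V_x\cong\mathbb{C}^r$, and work with the algebra
\[ R = \Sym^\bullet\Big( \bigoplus_{j=1}^{k} V_x^* \Big) = \bigoplus_{l\in\mathbb{N}^k} S^{l_1}V_x^*\otimes\cdots\otimes S^{l_k}V_x^* \]
of polynomial functions on the fibre $J_kV_x\cong V_x^{\oplus k}$, the $j$-th summand recording the dependence on $f^{(j)}(0)$. On $R$ there are two commuting group actions: the linear action of $GL(V_x)$ coming from the target (diagonal on the $k$ slots) and the reparametrisation action of $\Gk$; they commute because the first acts on the target and the second on the source $(\mathbb{C},0)$. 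By the definition of invariant jet differentials together with the sequence $1\to\Gpk\to\Gk\to\mathbb{C}^*\to 1$, the fibre of $E_{k,m}V^*$ is precisely the space of $\Gpk$-invariants in $R$ that are homogeneous of weight $m$ for the induced $\mathbb{C}^*$-action, i.e. $\sum_j j\,l_j=m$.

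The structural input I would exploit is the explicit homomorphism $\Gk\to GL_k(\mathbb{C})$ recorded above: setting $a_1=1$ identifies $\Gpk$ with a unipotent upper-triangular subgroup, so the substitution $f^{(j)}\mapsto(f\circ\varphi)^{(j)}=\sum_{i\le j}\tfrac{j!}{i!}p_{ij}f^{(i)}$ is triangular in the order of differentiation. Consequently each generator of $\mathrm{Lie}(\Gpk)$ shifts the jet order strictly downwards and can only decrease the tail-weights $\sum_{i>s}(i-s)l_i$. This shows at once that the multifiltration defined by the inequalities of Proposition~\ref{prop:Compact} is stable under $\Gpk$, and it is trivially stable under $GL(V)$ since the monomial type $(l_1,\dots,l_k)$, hence every tail-weight, is $GL(V)$-invariant. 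Thus $F_{\mathbf{a}}$ is a filtration by $GL(V)$-subrepresentations compatible with the reparametrisation action, which is exactly what one needs in order to pass to the associated graded while keeping track of both structures.

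Next I would take $\Gpk$-invariants and pass to $\mathrm{Gr}^\bullet$. Taking invariants for the full (weight-mixing) unipotent action gives the subring $R^{\Gpk}$, which is stable under the normalising $\mathbb{C}^*$ and therefore carries a weight grading whose degree-$m$ part is $E_{k,m}V^*$; this furnishes the identification with the $\Gpk$-invariants of $\bigoplus_l S^{l_1}V^*\otimes\cdots\otimes S^{l_k}V^*$ appearing in the displayed formula. On the associated graded the monomial types $(l_1,\dots,l_k)$ become genuinely separated, so $\mathrm{Gr}^\bullet E_{k,m}V^*$ decomposes as a sum over those types, and on each factor the invariants are computed inside $S^{l_1}V^*\otimes\cdots\otimes S^{l_k}V^*$. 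Because this action commutes with $GL(V)$ and finite-dimensional $GL(V)$-representations are completely reducible, the invariant subspace is automatically a $GL(V)$-subrepresentation, hence a direct sum of irreducibles. Concretely, combining the Cauchy decomposition $\Sym^m(V^*\otimes\mathbb{C}^k)=\bigoplus_{\lambda}\Gamma^\lambda V^*\otimes\Gamma^\lambda\mathbb{C}^k$ with the evaluation of $(\Gamma^\lambda\mathbb{C}^k)^{\Gpk}$ lets one read off the irreducible constituents and their multiplicities.

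The main obstacle is the compatibility of invariants with the associated graded. For a general unipotent group there is only a canonical inclusion $\mathrm{Gr}^\bullet(M^{\Gpk})\hookrightarrow(\mathrm{Gr}^\bullet M)^{\Gpk}$, which may well be strict, so the crux of the argument is to prove that it is an isomorphism here. This is precisely where the particular shape of $F_{\mathbf{a}}$ — engineered so that every lowering operator of $\mathrm{Lie}(\Gpk)$ is strictly filtration-decreasing — has to be used, and the triangularity computation with the coefficients $p_{ij}$ is the technical engine behind it. The bookkeeping of weights and the final semisimplicity argument are, by comparison, routine.
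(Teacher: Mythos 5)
The paper offers no proof of this proposition at all --- it is imported verbatim from \cite{Dem97}, \S 12 --- so there is no internal argument to compare against; I can only assess your reconstruction on its own terms. You get the setup right: the fibrewise polynomial algebra $R$, the commuting actions of $GL(V)$ and $\Gk$, the identification of $E_{k,m}V^*$ with the weight-$m$ part of $R^{\Gpk}$, and the triangularity of the reparametrisation action showing that the filtration of Proposition \ref{prop:Compact} is stable under both groups. The problem is the step you yourself single out as the crux: that the canonical injection $\mathrm{Gr}^{\bullet}(M^{\Gpk})\hookrightarrow(\mathrm{Gr}^{\bullet}M)^{\Gpk}$ is an isomorphism. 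You do not prove it, and as you have set it up it is in fact false. Precisely because every generator of $\mathrm{Lie}(\Gpk)$ is \emph{strictly} filtration-decreasing, the induced action of $\Gpk$ on $\mathrm{Gr}^{\bullet}E^{GG}_{k,m}V^*=\bigoplus_l S^{l_1}V^*\otimes\cdots\otimes S^{l_k}V^*$ is trivial, so $(\mathrm{Gr}^{\bullet}M)^{\Gpk}$ is the whole Green--Griffiths graded object and the inclusion is genuinely strict: already for $k=2$, $m=2$ one has $\mathrm{Gr}^{\bullet}E_{2,2}V^*=S^2V^*$ while the right-hand side with the induced action would be $S^2V^*\oplus V^*$. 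The superscript $\Gpk$ in the statement must refer to the original weight-mixing substitution action on the polynomial algebra, transported to the direct sum through the vector-space splitting by monomial type, not to the action induced on the associated graded.

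With that reading, the mechanism that actually closes the argument is not unipotent invariant theory but complete reducibility on the $GL(V)$ side: the filtration is $GL(V_x)$-stable, so fibrewise $\mathrm{Gr}^{\bullet}E_{k,m}V^*_x\cong E_{k,m}V^*_x$ as $GL(V_x)$-modules, and the latter is by definition the weight-$m$ part of $R^{\Gpk}$, i.e.\ the displayed right-hand side; since the two actions commute, this is a $GL(V_x)$-submodule and hence a direct sum of irreducibles. What then genuinely remains --- and what the filtration exists for in the first place --- is the global statement you dismiss by calling the result ``fibrewise'': one must check that the filtration is independent of the coordinate chart and that coordinate changes on $X$ act on the graded pieces through $GL(V)$ alone (their chain-rule correction terms being filtration-decreasing), so that the graded pieces glue to subbundles functorially built from $V^*$. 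Your triangularity computation is the right tool for that verification, but applied to the pseudogroup of coordinate changes rather than to $\Gpk$. The Cauchy-formula remark is a correct way to extract the irreducible constituents afterwards; as written, however, your proof stops exactly where the content lies and aims its final effort at a statement that does not hold.
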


\subsection{Curvature of jet bundles and hyperbolicity}

\begin{defi}\label{defi:KJet} Let $(X, V)$ be a directed manifold. We say that $(X, V)$ has {\em negative $k$-jet curvature} if there exist a singular Hermitian metric on the line bundle $\OO_{X_k}(1)$ such that $$ \Theta_{h_k}(\OO_{X_k}(1))|_{V_k} \geqslant \varepsilon \omega|_{V_k} \qquad V_k \subset T_{X_k}$$ in the sense of currents for some metric $\omega$ on $X_k$ and $\varepsilon > 0$. We say that $(X, V)$ has {\em negative total $k$-jet curvature} if $$ \Theta_{h_k}(\OO_{X_k}(1)) \geqslant \varepsilon \omega.$$ Finally, we say that $(X, V)$ has {\em nondegenerate} negative (total) $k$-jet curvature if in addition the degeneration set $\Sigma_{h_k}$ of the metric $h_k$ lies in $X^{\sing}_k$.
\end{defi}

A connection between hyperbolicity and $k$-jet negativity is given by the Ahlfors-Schwarz lemma (see \cite{Dem97}, Theorem 7.8).

\begin{prop} Let $(X, V)$ be a compact directed manifold. Suppose that $(X, V)$ has a metric with negative $k$-jet curvature. Then for every entire curve $f \colon \mathbb{C} \to (X, V)$ the $k$-th lifting $f_{[k]} \colon \mathbb{C} \to X_k$ is such that $f_{[k]}(\mathbb{C}) \subset X^{\sing}_k$. In particular, if $(X, V)$ has a metric with nondegenerate negative $k$-jet curvature then $(X, V)$ is hyperbolic.
\end{prop}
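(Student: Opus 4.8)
The plan is to pull back the $k$-jet metric $h_k$ to $\mathbb{C}$ along the lifted curve and run the Ahlfors--Schwarz lemma, in direct analogy with the classical proof ($k=1$) that negative holomorphic sectional curvature forces hyperbolicity. Two structural facts about the liftings drive the argument. First, by the inductive construction of the Demailly--Semple tower the lift $g:=f_{[k]}\colon\mathbb{C}\to X_k$ is tangent to $V_k$, that is $g'(t)\in V_{k,\,g(t)}$ for all $t$. Second, since $g(t)=(f_{[k-1]}(t),[f'_{[k-1]}(t)])$, the vector $s(t):=f'_{[k-1]}(t)$ is a canonical element of the tautological fibre $\OO_{X_k}(-1)_{g(t)}$, hence a holomorphic section of $g^*\OO_{X_k}(-1)$ satisfying $s(t)=(\pi_{k-1,k})_*g'(t)$; its zeros are the parameters where the jet degenerates and are therefore sent by $g$ into the vertical locus $X_k^{\sing}$.

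I would then form the pseudo-metric $\gamma:=\|s(t)\|^2_{h_k^{-1}}\,i\,dt\wedge d\bar t$ on $\mathbb{C}$, where $h_k^{-1}$ denotes the metric on $\OO_{X_k}(-1)$ dual to $h_k$. By the Lelong--Poincar\'e formula, interpreted in the sense of currents,
$$\frac{i}{2\pi}\partial\bar\partial\log\|s\|^2_{h_k^{-1}}=[\mathrm{div}\,s]+g^*\Theta_{h_k}(\OO_{X_k}(1))\geqslant g^*\Theta_{h_k}(\OO_{X_k}(1)).$$
Because $g'(t)\in V_k$, the hypothesis $\Theta_{h_k}(\OO_{X_k}(1))|_{V_k}\geqslant\varepsilon\,\omega|_{V_k}$ yields $g^*\Theta_{h_k}(\OO_{X_k}(1))\geqslant\varepsilon\,\|g'(t)\|^2_\omega\,i\,dt\wedge d\bar t$, so the Ricci form of $\gamma$ is bounded below by a positive multiple of $g^*\omega|_{V_k}$.

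Next I would compare $g^*\omega|_{V_k}$ with $\gamma$. The differential $(\pi_{k-1,k})_*\colon V_k\to\OO_{X_k}(-1)$ is norm--nonincreasing for the quotient metric $q$ induced by $\omega$, so $\|s\|^2_q\leqslant\|g'\|^2_\omega$; and on the open set where $h_k$ is a genuine, locally bounded metric the two continuous metrics $h_k^{-1}$ and $q$ on $\OO_{X_k}(-1)$ are locally comparable, giving $\mathrm{Ric}(\gamma)\geqslant A\,\gamma$ for some $A>0$. Applying the Ahlfors--Schwarz lemma on each disc $\Delta_R$ and letting $R\to\infty$ forces $\gamma\equiv0$, i.e. $s\equiv0$, wherever the estimate is valid; hence $g(\mathbb{C})=f_{[k]}(\mathbb{C})$ must be contained in the degeneracy set $\Sigma_{h_k}$ together with the singular jets $\{s=0\}\subset X_k^{\sing}$.

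The delicate point, and the one I expect to be the main obstacle, is precisely the comparison in the previous step: the dual metric $h_k^{-1}$ blows up along $\Sigma_{h_k}$, so the constant $A$ degenerates as $g$ approaches $\Sigma_{h_k}$ and the inequality $\mathrm{Ric}(\gamma)\geqslant A\gamma$ holds only off $\Sigma_{h_k}$. Turning the resulting local statement into the global conclusion that the \emph{whole} image is confined to $\Sigma_{h_k}\cup X_k^{\sing}$ --- while keeping track of every $\partial\bar\partial$--identity as a current across the pluripolar locus $\Sigma_{h_k}$ and across the divisor $\{s=0\}$ --- is the heart of the matter. Granting this, the final assertion follows: in the nondegenerate case $\Sigma_{h_k}\subset X_k^{\sing}$, so $f_{[k]}(\mathbb{C})\subset X_k^{\sing}$ for \emph{every} entire curve; were $f$ nonconstant we would have $f'(t)\neq0$ on a dense open set, where $f_{[k]}(t)\in X_k^{\reg}=X_k\setminus X_k^{\sing}$ by Proposition \ref{prop:Compact}, a contradiction. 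Thus all entire curves are constant and, by the Brody criterion \cite{Brd78}, $X$ is hyperbolic.
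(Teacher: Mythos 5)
The paper does not actually prove this proposition; it quotes it from \cite{Dem97} (Theorem 7.8), and your strategy is precisely the one used there: pull back the metric via the tautological section $s(t)=f'_{[k-1]}(t)$ of $f_{[k]}^*\OO_{X_k}(-1)$, apply Lelong--Poincar\'e, feed in the curvature hypothesis along $V_k$ using $(\pi_{k-1,k})_*f_{[k]}'(t)=s(t)$, and conclude by the Ahlfors--Schwarz lemma on $\Delta_R$ with $R\to\infty$. The outline and the final reduction (a nonconstant $f$ has a regular germ on a dense open set of parameters, so $f_{[k]}(\mathbb{C})\not\subset X_k^{\sing}$ by Proposition \ref{prop:Compact}) are correct.

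The gap is the step you explicitly defer (``granting this\dots''), and your diagnosis of it points in the wrong direction. With the paper's convention the curvature current of $(\OO_{X_k}(1),h_k)$ is bounded below by $\varepsilon\omega$, so the local weights $\varphi$ of $h_k$ are quasi-psh (in the total-curvature case relevant here; in the partial case the required local upper bound is built into the definition of a singular $k$-jet metric). Quasi-psh functions are locally bounded \emph{above} on the compact $X_k$, so the dual metric $h_k^{-1}$ on $\OO_{X_k}(-1)$, with weight $e^{2\varphi}$, does \emph{not} blow up along $\Sigma_{h_k}$: it is dominated globally by a smooth metric and tends to $0$ on the polar set $\Sigma_{h_k}=\{\varphi=-\infty\}$. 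Hence the comparison $\gamma_0=\|s\|^2_{h_k^{-1}}\leqslant C\,\|f_{[k]}'\|^2_\omega$ holds with a single uniform constant $C$ on all of $X_k$, the inequality $\tfrac{i}{2\pi}\partial\bar\partial\log\gamma_0\geqslant A\gamma$ with $A=\varepsilon/C$ is valid globally in the sense of currents, and the singular Ahlfors--Schwarz lemma applies at once --- provided one argues by contradiction, assuming $f_{[k]}(\mathbb{C})\not\subset\Sigma_{h_k}$ and $s\not\equiv 0$ so that $\log\gamma_0\in L^1_{\mathrm{loc}}(\mathbb{C})$. The conclusion $\gamma_0\equiv 0$ is then absurd, so either $s\equiv 0$ (whence $f$ is constant) or $f_{[k]}(t)\in\Sigma_{h_k}$ off the discrete zero set of $s$, and by closedness $f_{[k]}(\mathbb{C})\subset\Sigma_{h_k}$. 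This also spares you the claim that $\{s=0\}$ is mapped into $X_k^{\sing}$, which is not immediate (a point of $X_k$ reached by a singular jet may also be reached by a regular one). Note finally that the honest conclusion of the first assertion is $f_{[k]}(\mathbb{C})\subset\Sigma_{h_k}$; the containment in $X_k^{\sing}$ stated in the proposition is really a consequence of the nondegeneracy hypothesis, as your last paragraph correctly uses it.
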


In particular, negativity of total $k$-jet curvature is equivalent to bigness of $\OO_{X_k}(1)$, i.e. existence of many invariant jet differentials. In this case we have the following fundamental theorem (see e.g. \cite{Dem97}, Corollary 7.9) which establishes the existence of many algebraic differential equations satisfied by every entire curve $f \colon \mathbb{C} \to (X, V)$.

\begin{thm}{(M. Green -- P. Griffiths, J.-P. Demailly, Y.-T. Siu -- S.-K. Yeung)} Let $(X, V)$ be a compact directed manifold. Suppose that there exist $k, m \in \mathbb{N}$ and an ample line bundle $A$ such that $H^0(X_k, \OO_{X_k}(m) \otimes \pi^*_{0,k}A^{-1}) = H^0(X, E_{k,m}V^* \otimes A^{-1})$ has nonzero sections $\sigma_1, \ldots , \sigma_N$. Denote by $Z$ the base locus of these sections. Then for every entire curve $f \colon \mathbb{C} \to (X, V)$ the image of its $k$-th lifting lies in $Z$: $f_{[k]}(\mathbb{C}) \subset Z$. In other words, every entire curve $f \colon \mathbb{C} \to (X, V)$ satisfies the algebraic differential equations $\sigma_1(f', f'', \ldots, f^{(k)}) = \ldots = \sigma_N(f', f'', \ldots, f^{(k)}) = 0$.
\end{thm}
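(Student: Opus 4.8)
The plan is to manufacture, out of the given sections, a singular Hermitian metric on $\OO_{X_k}(1)$ whose degeneration set is exactly $Z$ and whose curvature is positive along every lifted curve, and then to run the Ahlfors--Schwarz argument of the preceding Proposition (\cite{Dem97}, Thm.~7.8) for that metric. Via the identification of Proposition \ref{prop:Compact} I view $\sigma_1,\dots,\sigma_N$ as sections of $L:=\OO_{X_k}(m)\otimes\pi_{0,k}^*A^{-1}$ on $X_k$. Fixing a smooth metric $h_A$ on $A$ with $\Theta_{h_A}(A)\ge\varepsilon_0\,\omega>0$ (possible since $A$ is ample), I define a singular metric $h_k$ on $\OO_{X_k}(1)$ as the $m$-th root of the singular metric on $L$ of local weight $\log\sum_j|\sigma_j|^2$, tensored with the pullback metric $\pi_{0,k}^*h_A$ on $\pi_{0,k}^*A$ (using $\OO_{X_k}(m)=L\otimes\pi_{0,k}^*A$). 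By construction the degeneration set is $\Sigma_{h_k}=Z$, and by the Lelong--Poincar\'e formula the curvature current satisfies
$$\Theta_{h_k}(\OO_{X_k}(1))=\tfrac1m\Big(i\partial\bar\partial\log\textstyle\sum_j|\sigma_j|^2+\pi_{0,k}^*\Theta_{h_A}(A)\Big)\ \ge\ \tfrac1m\,\pi_{0,k}^*\Theta_{h_A}(A),$$
the first term being semipositive as the pullback of a Fubini--Study form off $Z$.

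Next I would observe that this partial positivity, though weaker than negative $k$-jet curvature in the sense of Definition \ref{defi:KJet}, is enough for a fixed entire curve. Restricted to $V_k$ the form $\pi_{0,k}^*\Theta_{h_A}(A)$ degenerates on vertical directions, so $h_k$ need not be positive along all of $V_k$; however, for a nonconstant entire curve $f\colon\mathbb{C}\to(X,V)$ the $k$-lift $f_{[k]}$ has tangent $f'_{[k]}(t)\in V_k$ with $(\pi_{0,k})_*f'_{[k]}(t)=f'(t)$, which is nonzero off a discrete set. Hence along the curve
$$\pi_{0,k}^*\Theta_{h_A}(A)\big(f'_{[k]}(t),\overline{f'_{[k]}(t)}\big)=\Theta_{h_A}(A)\big(f'(t),\overline{f'(t)}\big)\ge\varepsilon_0\,|f'(t)|^2_\omega>0,$$
so the lift never probes the degenerate (vertical) directions of the curvature.

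For the Ahlfors--Schwarz step I would use the tautological inclusions $\OO_{X_i}(-1)\subset\pi_{i-1,i}^*V_{i-1}$ along the tower, together with $h_k$, to form the induced jet pseudometric $\gamma_k$ on $\mathbb{C}$ obtained by pulling back along $f_{[k]}$; its $\Gk$-invariance (weighted degree $m$) guarantees that $\big(\sum_j|\sigma_j(f',\dots,f^{(k)})|^2\big)^{1/m}|dt|^2$ is a reparametrization-invariant conformal pseudometric, degenerate only on $f_{[k]}^{-1}(Z)$ and on the discrete critical set of $f_{[k]}$. Exactly as in \cite{Dem97}, Thm.~7.8, the curvature lower bound above yields a differential inequality $\mathrm{Ric}(\gamma_k)\ge c\,\gamma_k$ with $c>0$ on $\mathbb{C}\setminus f_{[k]}^{-1}(Z)$. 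The Ahlfors--Schwarz lemma on the whole plane (radius $R\to\infty$) then forces $\gamma_k\equiv0$, which is possible only if every $g_j:=\sigma_j(f',\dots,f^{(k)})$ vanishes identically. Thus $f_{[k]}(\mathbb{C})\subset\{\sigma_j=0\}$ for all $j$, i.e.\ $f_{[k]}(\mathbb{C})\subset Z$, with the constant-curve case being trivial.

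The main obstacle I expect is the curvature inequality $\mathrm{Ric}(\gamma_k)\ge c\,\gamma_k$: I must verify that positivity of $\Theta_{h_k}$ in the \emph{single} horizontal direction $(\pi_{0,k})_*f'_{[k]}=f'$, rather than along the full subbundle $V_k$, already dominates the Ricci form of the pseudometric, and I must track the contributions of the intermediate bundles $\OO_{X_i}(1)$ and of the critical points of $f_{[k]}$ (where $f'(t)=0$). These last contributions enter only through nonnegative divisorial terms by Lelong--Poincar\'e, so they do not spoil the inequality; the genuine work is in checking that adapting the proof of the preceding Proposition to this weaker, merely horizontal positivity is legitimate, and that $Z$ correctly replaces $X^{\sing}_k$ as the degeneration locus in that argument.
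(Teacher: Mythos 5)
Your construction of the singular metric $h_k$ from the sections and the resulting lower bound $\mathrm{Ric}(\gamma_k)\geqslant \frac1m f^*\Theta_{h_A}(A)\geqslant \frac{\varepsilon_0}{m}|f'(t)|^2_{\omega}\,|dt|^2$ are both correct, but the step you yourself flag as the ``main obstacle'' is a genuine failure of the argument, not a verification to be carried out. The Ahlfors--Schwarz lemma requires $\mathrm{Ric}(\gamma_k)\geqslant c\,\gamma_k$, i.e. the pointwise inequality $|f'(t)|^2_{\omega}\geqslant c'\bigl(\sum_j\|\sigma_j(f',\ldots,f^{(k)})(t)\|^2\bigr)^{1/m}$, and this is false for $k\geqslant 2$. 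Indeed, writing $\sigma_j(f)(t)=\sigma_j(f_{[k]}(t))\cdot\bigl(f'_{[k-1]}(t)\bigr)^{\otimes m}$, the right-hand side is controlled by $\|f'_{[k-1]}(t)\|^2$ (measured via the inclusion $\OO_{X_k}(-1)\subset \pi_{k-1,k}^*V_{k-1}$), which involves $f'',\ldots,f^{(k)}$ and is in no way dominated by $|f'(t)|^2_{\omega}$: already for $k=2$ and a Wronskian-type differential $f'_if''_j-f'_jf''_i$ of weighted degree $3$, at points where $|f''|\gg|f'|^2$ one gets $\gamma_{k,0}\approx|f'\,f''|^{2/3}\gg|f'|^2$. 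Positivity of the curvature in the single horizontal direction $(\pi_{0,k})_*f'_{[k]}=f'$ therefore does not dominate the pseudometric; your scheme closes only for $k=1$, where jet differentials are polynomials in $f'$ alone and $\gamma_{k,0}\leqslant C|f'|^2_{\omega}$ does hold.

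For comparison: the paper does not prove this theorem at all --- it is quoted from \cite{Dem97}, Corollary 7.9 --- and the known proofs supply exactly the ingredient your sketch lacks. Demailly's argument does not use the section metric alone: by relative ampleness of a suitable weighted bundle $\OO_{X_k}(\mathbf{a})$ over $X$, there is a smooth metric on $\OO_{X_k}(\mathbf{a})\otimes\pi_{0,k}^*A^{\otimes N}$ whose curvature is positive along \emph{all} of $V_k$, and one takes the mixed metric $h=h_{\mathrm{sections}}^{\,1-\delta}\cdot h_{\mathrm{smooth}}^{\,\delta}$ with $0<\delta\ll1$, so that the horizontal term $\frac{1-\delta}{m}\pi_{0,k}^*\Theta_{h_A}(A)$ absorbs the bounded horizontal negativity (including the $\pi_{0,k}^*A^{-N\delta}$ twist) while the $\delta$-part provides the vertical positivity. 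The result is a genuine $k$-jet metric with $\Theta|_{V_k}\geqslant\varepsilon\,\omega|_{V_k}$ and degeneration set contained in $Z\cup X_k^{\sing}$, to which the preceding Proposition (Ahlfors--Schwarz for jet metrics) applies; one then concludes $f_{[k]}(\mathbb{C})\subset Z$ because a nonconstant $f$ has $f_{[k]}(t)\in X_k^{\reg}$ wherever $f'(t)\neq0$, so $f_{[k]}(\mathbb{C})\not\subset X_k^{\sing}$ and the identity theorem eliminates the vertical component of the degeneration set. (The alternative proof of Siu--Yeung replaces this by Nevanlinna theory and the logarithmic derivative lemma, which is precisely how the uncontrolled higher derivatives $f'',\ldots,f^{(k)}$ are tamed there.) To repair your proposal you would need to incorporate one of these two mechanisms; the purely horizontal curvature bound cannot do the job.
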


\section{The main result}\label{se:Proof}

In this section we prove our main Theorem \ref{thm:Main}. The proof is divided into three steps. Below we assume the following to hold.

\begin{assum}\label{assum:Main} Let $X$ be a smooth complex projective variety. We assume that there exists $k \in \mathbb{N}$ such that $\OO_{X_k}(1)$ is endowed with a Hermitian metric $h_k$ with degeneration set $\Sigma_{h_k} \subset X_k^{\sing}$ and curvature current $\Theta_{h_k}(\OO_{X_k}(1)) \geqslant \varepsilon \omega$ for a metric $\omega$ on $X_k$ and $\varepsilon > 0$.
\end{assum}

\subsection{Step 1: the canonical line bundle is nef}

We present here a proposition from \cite{Dem97}, 8.1, which implies nefness of $K_X$.

\begin{prop}\label{prop:Curves} Let $X$ be a complex projective variety. Suppose that $(X, T_X)$ has negative $k$-jet curvature. Then there exist a constant $\varepsilon > 0$ such that for every closed irreducible curve $C \subset X$ such that $\nu_{[k]}(\bar C) \not \subset \Sigma_{h_k}$ the following inequality holds: 
$$ -\chi(\bar C) = 2g(\bar C) - 2 \geqslant \varepsilon\deg_{\omega}(C) + \sum_{t \in \bar C}(m_{k-1}(t) - 1) > 0 $$
 where $\nu \colon \bar C \to C$ is the normalization and $m_{k}(t)$ is the multiplicity at point $t$ of the lifting $\nu_{[k]} \colon \bar C \to P_kT_X = X_k$.
\end{prop}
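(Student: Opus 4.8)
The plan is to pull the entire problem back to the compact Riemann surface $\bar C$ along the $k$-th lifting $\nu_{[k]} \colon \bar C \to X_k$ and to combine two independent ingredients: a topological computation of the degree $\deg \nu_{[k]}^*\OO_{X_k}(1)$ of Riemann--Hurwitz type, and a curvature lower bound coming from the negativity hypothesis. The structural fact underlying both is that the lifting is tangent to $V_k$: by the very construction of the Demailly--Semple tower, $(\nu_{[k]})'(t) \in V_{k,\,\nu_{[k]}(t)}$ for every $t \in \bar C$. Since $\nu_{[k]}(\bar C) \not\subset \Sigma_{h_k}$, the curve is in particular non-constant and all of its liftings are defined.

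First I would record the tautological derivative identity. The differential of the $(k-1)$-st lifting is a morphism $d\nu_{[k-1]} \colon T_{\bar C} \to \nu_{[k-1]}^* V_{k-1}$ whose image at a generic point is the tautological line $\nu_{[k]}^*\OO_{X_k}(-1) \subset \nu_{[k-1]}^* V_{k-1}$. Hence $d\nu_{[k-1]}$ factors through a nonzero morphism of line bundles $T_{\bar C} \to \nu_{[k]}^*\OO_{X_k}(-1)$, equivalently a nonzero section of $K_{\bar C} \otimes \nu_{[k]}^*\OO_{X_k}(-1)$. Its zero divisor is supported at the critical points of $\nu_{[k-1]}$, and I would check that its order at $t$ equals $m_{k-1}(t) - 1$. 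Computing the degree of this line bundle by counting zeros gives the identity
$$ 2g(\bar C) - 2 = \deg \nu_{[k]}^*\OO_{X_k}(1) + \sum_{t \in \bar C}\bigl(m_{k-1}(t) - 1\bigr). $$

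The second ingredient, and the step I expect to be the main obstacle, is the curvature estimate. Because $\nu_{[k]}(\bar C) \not\subset \Sigma_{h_k}$, the pullback $\nu_{[k]}^* h_k$ is a genuine singular Hermitian metric on $\nu_{[k]}^*\OO_{X_k}(1)$ with quasi-subharmonic local weights; its curvature current represents $c_1$, so its total mass over $\bar C$ equals $\deg \nu_{[k]}^*\OO_{X_k}(1)$. Along the curve only the $V_k$-direction is seen, so the assumed bound $\Theta_{h_k}(\OO_{X_k}(1))|_{V_k} \geq \varepsilon\,\omega|_{V_k}$ pulls back to $\nu_{[k]}^*\Theta_{h_k} \geq \varepsilon\,\nu_{[k]}^*\omega \geq 0$ as currents on $\bar C$. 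Integrating and setting $\deg_\omega(C) := \tfrac{1}{2\pi}\int_{\bar C}\nu_{[k]}^*\omega > 0$ yields
$$ \deg \nu_{[k]}^*\OO_{X_k}(1) = \int_{\bar C} c_1\bigl(\nu_{[k]}^*\OO_{X_k}(1),\,\nu_{[k]}^* h_k\bigr) \geq \varepsilon\,\deg_\omega(C) > 0. $$
The delicate point is the potential-theoretic justification that a positively curved \emph{singular} metric still integrates to the degree in the correct direction once its polar set meets $\bar C$ in at most finitely many points; this is exactly what the hypothesis $\nu_{[k]}(\bar C) \not\subset \Sigma_{h_k}$ secures, and I would invoke the Lelong--Poincaré formula, noting that the extra Lelong masses are nonnegative and therefore only help the inequality.

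Finally I would substitute the curvature lower bound into the degree identity to obtain
$$ 2g(\bar C) - 2 \geq \varepsilon\,\deg_\omega(C) + \sum_{t \in \bar C}\bigl(m_{k-1}(t) - 1\bigr) > 0, $$
the strict positivity because $\varepsilon > 0$, $\deg_\omega(C) > 0$ and each $m_{k-1}(t) - 1 \geq 0$. In particular the genus of $\bar C$ is at least $2$, and applying this bound to curves sweeping out $X$ is what will give nefness of $K_X$ in the next step. Of the two ingredients, the first is a routine identification of the ramification term with the lifting multiplicities; the only genuinely analytic difficulty lies in the singular-metric degree estimate of the second.
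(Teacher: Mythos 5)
Your proposal is correct and follows essentially the same route as the paper: the derivative of the $(k-1)$-st lifting gives the isomorphism $T_{\bar C} \simeq \nu_{[k]}^*\OO_{X_k}(-1)\otimes\OO_{\bar C}\bigl(-\sum_j(m_j-1)t_j\bigr)$, the degree identity is the Gauss--Bonnet computation, and the hypothesis $\nu_{[k]}(\bar C)\not\subset\Sigma_{h_k}$ makes the curvature integral well-defined and bounded below by $\varepsilon\deg_\omega(C)$. The only cosmetic difference is that you measure $\deg_\omega(C)$ by pulling back the metric on $X_k$ along $\nu_{[k]}$, whereas the paper passes through $(\pi_{0,k})_*$ to compare with a metric on $X$ itself; this does not affect the argument.
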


\begin{proof} We have a lifting $\nu_{k} \colon \bar C \to X_k$ and the derivative gives a canonical map $$\nu'_{[k-1]} \colon T_{\bar C} \to \nu^*_{[k]}\OO_{X_k}(- 1).$$ Let $t_j \in \bar C$ be the singular points of $\nu_{[k-1]}$ and let $m_j = m_{k-1}(t_j)$ be the corresponding multiplicities. Then $\nu'_{[k-1]}$ vanishes to order $m_j - 1$ at $t_j$ so we obtain $$ T_{\bar C} \simeq \nu^*_{[k]}\OO_{X_k}(- 1) \otimes \OO_{\bar C}\Bigl( - \sum_j(m_j - 1)t_j\Bigr).$$ We take a metric $h_k$ with negative $k$-jet curvature and degeneration set $\Sigma_{h_k}$. From the nondegeneracy assumption it follows that $\nu_{[k]}(\bar C) \not \subset \Sigma_{h_k}$ so $\int_{\bar C}\Theta_{h_k}(\nu^*_{[k]}\OO_{X_k}(1))$ is well-defined. The Gau\ss  -- Bonnet formula yields $$2g(\bar C) - 2 = \int_{\bar C}\Theta(\Omega^1_{\bar C}) = \sum_j(m_j - 1) + \int_{\bar C}\Theta_{h_k}(\nu^*_{[k]}\OO_{X_k}(1)).$$ The hypothesis on curvature implies $$ \Theta_{h_k}(\nu^*_{[k]}\OO_{X_k}(1))(\xi) \geqslant \varepsilon'|\xi|^2_{\omega_k} \geqslant \varepsilon''|(\pi_{0,k})_*\xi|^2_{\omega}$$ for some $\varepsilon', \varepsilon'' > 0$ and smooth Hermitian metrics $\omega_k$ on $X_k$. As $\pi_{0,k} \circ \nu_{[k]} = \nu$ we infer that $\Theta_{h_k}(\nu^*_{[k]}\OO_{X_k}(1)) \geqslant \varepsilon\omega$ and therefore $$ \int_{\bar C}\Theta_{h_k}(\nu^*_{[k]}\OO_{X_k}(1)) \geqslant \frac{\varepsilon''}{2\pi}\int_{\bar C}\omega = \varepsilon\deg_{\omega}(C)$$ with $\varepsilon = \varepsilon''/2\pi$. Proposition \ref{prop:Curves} now follows.
\end{proof}

\begin{cor} In the assumptions of \ref{assum:Main} , the canonical line bundle $K_X$ is nef.
\end{cor}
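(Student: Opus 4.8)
The plan is to combine the genus estimate of Proposition \ref{prop:Curves} with the Cone Theorem of Mori theory. Under Assumption \ref{assum:Main} the metric $h_k$ has total negative $k$-jet curvature, $\Theta_{h_k}(\OO_{X_k}(1)) \geqslant \varepsilon\omega$, which a fortiori gives negative $k$-jet curvature in the sense of Definition \ref{defi:KJet}; hence Proposition \ref{prop:Curves} is at our disposal. The point I would stress is that its hypothesis $\nu_{[k]}(\bar C) \not\subset \Sigma_{h_k}$ is in fact satisfied by \emph{every} irreducible curve $C \subset X$, precisely because of the nondegeneracy assumption $\Sigma_{h_k} \subset X^{\sing}_k$.

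First I would verify this last claim. Let $\nu \colon \bar C \to C \subset X$ be the normalization of an irreducible curve. Since $\nu$ is birational onto its image and $\bar C$ is a smooth curve, the differential $d\nu$ is nonzero at a general point $t \in \bar C$, so $\nu$ is an immersion near $t$ and defines a regular germ of curve. Consequently its $k$-th lifting $\nu_{[k]}$ sends a general point of $\bar C$ into the regular locus $X^{\reg}_k = X_k \setminus X^{\sing}_k$ (by the identification in Proposition \ref{prop:Compact}). Therefore $\nu_{[k]}(\bar C) \not\subset X^{\sing}_k$, and since $\Sigma_{h_k} \subset X^{\sing}_k$ we get $\nu_{[k]}(\bar C) \not\subset \Sigma_{h_k}$, as needed. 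Proposition \ref{prop:Curves} then applies to every irreducible curve and yields
$$ 2g(\bar C) - 2 \geqslant \varepsilon \deg_{\omega}(C) + \sum_{t \in \bar C}(m_{k-1}(t) - 1) \geqslant \varepsilon \deg_{\omega}(C) > 0, $$
so that $g(\bar C) \geqslant 2$ for every irreducible curve $C \subset X$. In particular $X$ contains no rational and no elliptic curves.

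Finally I would deduce nefness. Suppose for contradiction that $K_X$ is not nef. Since $X$ is smooth and projective, the Cone Theorem provides a $K_X$-negative extremal ray of $\overline{NE}(X)$, generated by a rational curve $C$ with $0 < -K_X \cdot C \leqslant \dim X + 1$. Such a $C$ has normalization $\mathbb{P}^1$, hence genus $0$, contradicting the bound $g(\bar C) \geqslant 2$ established above. Therefore $K_X \cdot C \geqslant 0$ for every irreducible curve, i.e. $K_X$ is nef.

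The genuinely delicate step is the verification that every irreducible curve lifts outside the degeneracy set: it is exactly here that the nondegeneracy hypothesis $\Sigma_{h_k} \subset X^{\sing}_k$ is used, converting the a priori conditional estimate of Proposition \ref{prop:Curves} into the uniform statement $g(\bar C) \geqslant 2$ valid for all curves. Once this is in place, the passage to nefness is a formal application of Mori's Cone Theorem, the only external input being the non-existence of $K_X$-negative rational curves.
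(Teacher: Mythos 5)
Your proposal is correct and follows essentially the same route as the paper: apply Proposition \ref{prop:Curves} to conclude that every irreducible curve has negative Euler characteristic, hence $X$ carries no rational curves, and then invoke the Cone Theorem. The paper's proof is a one-line version of yours; your explicit verification that the nondegeneracy hypothesis $\Sigma_{h_k} \subset X^{\sing}_k$ makes Proposition \ref{prop:Curves} applicable to \emph{every} curve is a useful detail that the paper leaves implicit.
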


\begin{proof} Indeed, by Proposition \ref{prop:Curves} we have the inequality $\chi(\bar C) < 0$ for every $C \subset X$ which shows that $X$ has no rational curves, therefore $K_X$ is nef by the Cone Theorem (see e. g. \cite{Deb01}, Theorem 6.1).
\end{proof}

\subsection{Step 2: the canonical line bundle is big}

This is the core of the proof. First of all, we state the bigness criterion of Campana and P\u{a}un (\cite{CamP16}, Theorem 1.2).

\begin{thm}\label{thm:CampanaPaun}{(F. Campana -- M. P{\u{a}}un)} Let $X$ be a complex projective variety such that $K_X$ is pseudoeffective. If there exists a big line bundle $\LL \to X$ and $m \in \mathbb{N}$ such that $$ H^0\left(X, \Omega_X^1(D)^{\otimes m}\otimes \LL^{-1}\right) \neq 0$$ then the canonical line bundle $K_X + D$ of the pair $(X, D)$ is big.
\end{thm}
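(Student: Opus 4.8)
The plan is to argue by contradiction, combining the duality between the pseudoeffective and movable cones with an orbifold version of Miyaoka's generic semipositivity theorem. First I would record that $K_X + D$ is pseudoeffective, being the sum of the pseudoeffective class $K_X$ and the effective divisor $D$. The nonzero section of $\Omega_X^1(D)^{\otimes m}\otimes\LL^{-1}$ provides a sheaf injection $\LL \hookrightarrow \Omega_X^1(D)^{\otimes m}$, so that $\LL$ sits as a subsheaf of strictly positive slope inside a tensor power of the (logarithmic/orbifold) cotangent sheaf, whose determinant is $K_X + D$ and whose rank is $n = \dimn X$.

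Next I would suppose for contradiction that $K_X + D$ is not big. Since it is pseudoeffective but fails to lie in the interior of the pseudoeffective cone, the Boucksom--Demailly--P\u{a}un--Peternell duality $\overline{\mathrm{Eff}}(X) = \mathrm{Mov}(X)^{\vee}$ produces a nonzero movable class $\alpha$ with $(K_X + D)\cdot\alpha = 0$, while $(K_X + D)\cdot\beta \geqslant 0$ for every movable $\beta$. I would then study the Harder--Narasimhan filtration of $\Omega_X^1(D)$ with respect to $\alpha$, in the sense of stability for movable classes as developed by Greb--Kebekus--Peternell. The determinant computation shows that the average $\alpha$-slope of $\Omega_X^1(D)$ is $\tfrac{1}{n}(K_X+D)\cdot\alpha = 0$.

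The decisive input is the orbifold generic semipositivity theorem (the logarithmic analogue of Miyaoka's theorem): since $K_X + D$ is pseudoeffective, $\Omega_X^1(D)$ is generically nef, i.e. $\mu_{\alpha,\min}(\Omega_X^1(D)) \geqslant 0$. Together with the vanishing of the average slope this forces every Harder--Narasimhan slope to vanish, so $\Omega_X^1(D)$ is $\alpha$-semistable of slope $0$. As we work in characteristic zero, semistability is preserved under tensor products (again in the movable-class formulation), whence $\Omega_X^1(D)^{\otimes m}$ is $\alpha$-semistable of slope $0$ and admits no subsheaf of positive $\alpha$-slope. On the other hand, $\LL$ is big, so by the same duality $c_1(\LL)\cdot\alpha > 0$; thus the subsheaf $\LL \hookrightarrow \Omega_X^1(D)^{\otimes m}$ has strictly positive $\alpha$-slope, a contradiction. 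Hence $K_X + D$ is big.

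The hard part is the generic semipositivity estimate $\mu_{\alpha,\min}(\Omega_X^1(D)) \geqslant 0$ for an \emph{arbitrary} movable class $\alpha$, rather than for a complete-intersection polarization, which is exactly the technical core of the result. A destabilizing quotient of negative slope would dualize to a positive-slope subsheaf of the logarithmic tangent sheaf, that is, a foliation of positive slope; by the algebraicity and rational-connectivity criteria of Bogomolov--McQuillan and Bost its leaves would be algebraic and rationally connected, making $X$ uniruled in the orbifold sense and contradicting the pseudoeffectivity of $K_X + D$. Setting up stability, the Harder--Narasimhan formalism, and the tensor-product theorem with respect to movable (rather than ample) classes, and carrying the foliation argument through in this generality, is the remaining substantial ingredient.
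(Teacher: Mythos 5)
The paper does not prove this statement: it is quoted as a black box from Campana--P\u{a}un \cite{CamP16}, Theorem 1.2, and used only as an input to Proposition \ref{prop:Big}. There is therefore no internal proof to compare yours against, and the relevant question is whether your sketch is a faithful account of the argument in \cite{CamP16}. It is. You correctly identify the three pillars: (i) if $K_X+D$ is pseudoeffective but not big, the Boucksom--Demailly--P\u{a}un--Peternell duality between the pseudoeffective and movable cones yields a nonzero movable class $\alpha$ with $(K_X+D)\cdot\alpha=0$; (ii) Harder--Narasimhan theory and the tensor-product theorem for semistability with respect to movable classes (Greb--Kebekus--Peternell) reduce everything to showing $\mu_{\alpha,\min}(\Omega^1_X(\log D))\geqslant 0$, after which the vanishing of the total slope forces $\Omega^1_X(\log D)^{\otimes m}$ to be $\alpha$-semistable of slope $0$, incompatible with the positive-slope subsheaf $\LL$; (iii) the generic semipositivity estimate itself, proved by contradiction via foliations. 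You are also right, and candid, that (iii) is the genuine content and that your last paragraph only gestures at it. Two small points deserve flagging if this were to be expanded: a positive-slope subsheaf of the log tangent sheaf is not automatically a foliation --- involutivity of the maximal destabilizing piece has to be extracted from the Lie bracket by a slope comparison, which again uses the tensor-product theorem for movable classes; and the final contradiction must be with pseudoeffectivity of $K_X+D$, not of $K_X$, so the rational curves produced from the algebraic, rationally connected leaves must be shown to pair negatively with $K_X+D$ (the ``orbifold uniruledness'' you allude to), which is precisely where the orbifold bookkeeping of \cite{CamP15, CamP16} enters. As a reconstruction of the cited proof your outline is accurate; as a self-contained proof it defers the hard analytic-geometric core, which is appropriate for an imported theorem of this depth.
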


Thus in our case we need to construct a nontrivial morphism $\LL \to (\Omega^1_X)^{\otimes m}$ from a big line bundle $\LL$ to the sheaf of pluridifferentials $(\Omega^1_X)^{\otimes m}$. To do so, we will use the fitration structure on $\OO(E_{k,m})$ as follows (see \cite{CamP15}, Theorem 4.5 for the version for Green-Griffiths jets).

\begin{prop}\label{prop:Big} Let $X$ is a projective variety with the canonical line bundle $K_X$ pseudoeffective. Suppose that there exists $\mathbf{a} = (a_1, \ldots, a_k) \in \mathbb{N}^k$ such that $\OO_{X_k}(\mathbf{a})$ is big. Then the canonical line bundle $K_X$ is big.
\end{prop}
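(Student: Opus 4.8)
The plan is to reduce the statement to the bigness criterion of Campana--P\u{a}un (Theorem \ref{thm:CampanaPaun}) applied with $D = 0$. Since $K_X$ is assumed pseudoeffective, it suffices to produce an integer $m \geqslant 1$, a big line bundle $\LL$ on $X$, and a nonzero section of $(\Omega^1_X)^{\otimes m} \otimes \LL^{-1}$, equivalently a nonzero sheaf morphism $\LL \to (\Omega^1_X)^{\otimes m}$. The bigness of $\OO_{X_k}(\mathbf{a})$ will furnish many invariant jet differentials, and the filtration of Proposition \ref{prop:Compact} together with the graded decomposition of Proposition \ref{prop:Decomp} will convert one of them into a genuine pluridifferential. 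Throughout I use $V = T_X$, so $V^* = \Omega^1_X$.

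First I would convert bigness on $X_k$ into an effectivity statement twisted by a bundle pulled back from $X$. Fix an ample line bundle $H$ on $X$; then $\pi_{0,k}^*H$ is nef on $X_k$. Since the big cone of $X_k$ is open and contains the class of $\OO_{X_k}(\mathbf{a})$, the $\mathbb{R}$-class $c_1(\OO_{X_k}(\mathbf{a})) - \tfrac{1}{N}\,\pi_{0,k}^* c_1(H)$ remains big for $N \gg 0$. Multiplying by $N$ and then by a large integer $q$ to pass to an integral class with global sections, I obtain $\mathbf{a}' = qN\mathbf{a} \in \mathbb{N}^k$ and the ample (hence big) line bundle $\LL := H^{q}$ on $X$ such that $H^0\big(X_k,\ \OO_{X_k}(\mathbf{a}') \otimes \pi_{0,k}^*\LL^{-1}\big) \neq 0$. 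Writing $m'$ for the weighted degree $a_1' + 2a_2' + \cdots + ka_k'$, the projection formula and the direct image identity of Proposition \ref{prop:Compact} give
\[ H^0\big(X_k,\ \OO_{X_k}(\mathbf{a}') \otimes \pi_{0,k}^*\LL^{-1}\big) \;=\; H^0\big(X,\ F_{\mathbf{a}'}E_{k,m'}V^* \otimes \LL^{-1}\big), \]
so in particular $H^0\big(X,\ E_{k,m'}V^* \otimes \LL^{-1}\big) \neq 0$.

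Next I would extract an actual tensor differential. The bundle $E_{k,m'}\Omega^1_X$ carries the filtration by subbundles $F_\bullet$ of Proposition \ref{prop:Compact}, whose graded pieces are, by Proposition \ref{prop:Decomp}, the $\Gpk$-invariants $(S^{l_1}\Omega^1_X \otimes \cdots \otimes S^{l_k}\Omega^1_X)^{\Gpk}$ with $l_1 + 2l_2 + \cdots + kl_k = m'$. Taking the symbol of a nonzero global section with respect to this filtration (i.e. its nonzero image in the first graded quotient it hits) yields a nonzero section of
\[ \big(S^{l_1}\Omega^1_X \otimes \cdots \otimes S^{l_k}\Omega^1_X\big)^{\Gpk} \otimes \LL^{-1} \]
for some multi-index $l$; set $p := l_1 + \cdots + l_k$, which satisfies $p \geqslant 1$ because $\sum_i i\,l_i = m' \geqslant 1$. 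In characteristic zero each $S^{l_i}\Omega^1_X$ is a direct summand of $(\Omega^1_X)^{\otimes l_i}$, so the invariant subspace above is a direct summand of $(\Omega^1_X)^{\otimes p}$; composing with the split inclusion produces a nonzero section of $(\Omega^1_X)^{\otimes p} \otimes \LL^{-1}$. Applying Theorem \ref{thm:CampanaPaun} with $D = 0$, $m = p$ and the big bundle $\LL$ then shows that $K_X$ is big, as desired.

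The openness of the big cone and the elementary representation theory are routine; the genuine core of the argument, and the step I expect to demand the most care, is the passage in the third paragraph from invariant jet differentials to ordinary pluridifferentials, where one must track the weighted grading of $E_{k,m'}$ and verify that the symbol of a nonzero jet differential lands in a genuine summand $(\Omega^1_X)^{\otimes p}$ of controlled degree $p \geqslant 1$, so that the hypotheses of the Campana--P\u{a}un criterion are literally met. This is the analogue for Demailly--Semple jets of \cite{CamP15}, Theorem 4.5.
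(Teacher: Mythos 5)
Your proof is correct and takes essentially the same route as the paper: Kodaira's lemma (which you phrase via openness of the big cone) yields a nonzero jet differential with values in $\LL^{-1}$, the filtration of Propositions \ref{prop:Compact}--\ref{prop:Decomp} turns its symbol into a genuine pluridifferential of positive degree, and Theorem \ref{thm:CampanaPaun} with $D=0$ concludes. The only difference is organizational --- the paper first routes the section through $\Sym^k$ of the direct image sheaf $\EE$ before invoking the filtration, whereas you work directly with a section of $E_{k,m'}\Omega^1_X\otimes\LL^{-1}$, which is if anything slightly cleaner.
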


\begin{proof}
Let us denote the direct image sheaf $(\pi_{0,k})_*\OO_{X_k}(\mathbf{a})$ by $\EE$ with $\mathbf{a} = (a_1, \ldots, a_k) \in \mathbb{N}^k$ such that $\OO_{X_k}(\mathbf{a})$ is big. Take also an ample line bundle $A$ on $X$. By Proposition \ref{prop:Compact} the direct image $(\pi_{0,k})_*$ gives the identification $$ H^0(X_k, \OO_{X_k}(k \cdot \mathbf{a}) \otimes A^{-1}) = H^0(X, \Sym^k(\EE) \otimes A^{-1})$$ for all $k \in \mathbb{N}$. Since $\OO_{X_k}(\mathbf{a})$ is big, the Kodaira lemma gives a nonzero section $s \in H^0(X, \Sym^k\EE \otimes A^{-1})$ for some $k \in \mathbb{N}$ or, equivalently, a sheaf injection $$0 \to A \to \Sym^k(\EE) \to \bigotimes_k \OO(E_{k,m}\Omega^1_X).$$ By Proposition \ref{prop:Decomp} there exists a filtration on $\OO(E_{k, m}\Omega_X^1)$ such that the graded object is a direct sum of irreducible $GL(T_X)$-representations contained in $(\Omega^1_X)^{\otimes l}$ for $l \leqslant |\mathbf{a}|$. By induction on $k$, we can endow $k$-th tensor power of $\OO(E_{k,m}\Omega^1_X)$ with a filtration such that its graded pieces are contained in $(\Omega^1_X)^{\otimes lk}$. Taking the induced morphism of graded sheaves, we obtain a nontrivial morphism $A \to (\Omega^1_X)^{\otimes lk}$ for some $l \in \mathbb{N}$. Applying Theorem \ref{thm:CampanaPaun} with $D = \emptyset$, $\LL = A$ and $m = lk$ we obtain bigness of $K_X$.
\end{proof}

Recall that by Step 1 the canonical line bundle $K_X$ is nef. Therefore an application of Proposition \ref{prop:Big} with $\mathbf{a} = (0, \ldots, 1)$ and completes Step 2 of the proof. 

\begin{rmk}\label{rmk:Alter} If the degeneracy locus $\Sigma$ of the metric on $\OO_{X_k}(\mathbf{a})$ does not dominate $X$ then $K_X$ is automatically pseudoeffective. Indeed, we only need to note that by Proposition \ref{prop:Curves} $X$ is not covered by rational curves, so $K_X$ is pseudoeffective by \cite{BDPP13}. 
\end{rmk}

\subsection{Step 3: the canonical line bundle is ample} This part of the proof is completely standard (see e. g. \cite{WY16, DT16, HLW10}). Now $K_X$ is big and nef, so by the Base Point-Free theorem it is semiample, i. e. there exists a birational contraction $c \colon X \to X'$ such that $K_X = c^*A$ where $A$ is ample on $X'$. Every irreducible component of $\Exc(c) = \mathrm{Null}(K_X)$ must be covered by rational curves contracted by $c$. On the other hand, by Proposition \ref{prop:Curves} there are no rational curves on $X$. Thus $\Exc(c) = \mathrm{Null}(K_X)$ is empty. Then by Nakai -- Moishezon criterion $K_X$ is ample, as desired. Alternatively, we could use the following result \cite{Tak08}, Theorem 1.1 (ii).

\begin{thm}\label{thm:Takayama}{(S. Takayama)} Let $X$ be a smooth projective variety with $K_X$ big. Then every irreducible component of the non-ample locus $\mathrm{NAmp}(K_X)$ is uniruled.
\end{thm}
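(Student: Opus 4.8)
The plan is to reduce the statement to a restricted--volume criterion and then invoke the uniruledness characterization of Boucksom--Demailly--P\u{a}un--Peternell. Since $K_X$ is big, the non-ample locus coincides with the augmented base locus, $\mathrm{NAmp}(K_X) = \mathbf{B}_+(K_X)$, and by the theorem of Ein--Lazarsfeld--Musta\c{t}\u{a}--Nakamaye--Popa the augmented base locus of a big class is the union of those positive-dimensional subvarieties $V \subset X$ along which the restricted volume vanishes:
\[ \mathbf{B}_+(K_X) = \bigcup_{\mathrm{vol}_{X|V}(K_X) = 0} V. \]
In particular, if $V$ is an irreducible component of $\mathrm{NAmp}(K_X)$ then $\mathrm{vol}_{X|V}(K_X) = 0$. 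Thus it suffices to prove that any subvariety $V \subset X$ with $\mathrm{vol}_{X|V}(K_X) = 0$ is uniruled. I would argue by contradiction: suppose $V$ is not uniruled and let $\nu \colon \tilde V \to V$ be a resolution of singularities. By the criterion of Boucksom--Demailly--P\u{a}un--Peternell, non-uniruledness of $\tilde V$ is equivalent to pseudoeffectivity of $K_{\tilde V}$, and I would aim to derive from this that $\mathrm{vol}_{X|V}(K_X) > 0$, a contradiction.

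The bridge between the two sides should be provided by adjunction. One natural approach is to reduce to the case where $V$ is a smooth divisor in a suitable birational model: blowing up $X$ along $V$ and resolving produces $p \colon X' \to X$ with an exceptional prime divisor $D$, fibred over $V$ via $\rho \colon D \to V$, and the restricted volume of $K_X$ along $V$ is controlled by the behaviour of $p^* K_X$ along $D$. Adjunction on $X'$ gives $K_D = (K_{X'} + D)|_D$, and combining this with the discrepancy formula $K_{X'} = p^* K_X + (c-1)D$ for the blow-up of a smooth center of codimension $c$ expresses $p^* K_X|_D$ in terms of $\rho^* K_V$ together with relative positivity terms of the projective bundle. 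The pseudoeffectivity of $K_{\tilde V}$, hence of $K_V$, would then propagate to $D$ and ought to feed into a strictly positive lower bound for the restricted volume of $p^* K_X$ along $D$.

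The hard part is exactly making this subadjunction comparison effective. The naive computation only shows that $p^* K_X|_D$ differs from $\rho^* K_V$ by normal-bundle and relative-canonical contributions whose sign is \emph{not} controlled, so that pseudoeffectivity of $K_V$ does not by itself yield positivity of the restricted volume. Overcoming this requires the full theory of restricted volumes together with Viehweg-type positivity of relative pluricanonical sheaves: one restricts pluricanonical sections of $K_X$ to infinitesimal neighbourhoods of $V$, uses the positivity of $K_{\tilde V}$ to manufacture many such sections, and estimates their asymptotic growth to force $\mathrm{vol}_{X|V}(K_X) > 0$. Isolating and controlling these positivity contributions is the technical heart of the argument, and I expect it to be where essentially all the difficulty lies.

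Finally, I note that in the situation of the present paper $K_X$ is moreover nef, and then a cleaner route is available. By the base-point-free theorem $K_X$ is semiample; the associated birational Iitaka morphism $\phi \colon X \to X_{\mathrm{can}}$ onto the canonical model satisfies $\mathrm{NAmp}(K_X) = \mathrm{Null}(K_X) = \Exc(\phi)$ by Nakamaye's theorem, and every positive-dimensional fibre of a birational contraction onto a variety with canonical singularities is rationally chain connected (Hacon--McKernan), whence every component of $\Exc(\phi)$ is uniruled. This shortcut is, however, unavailable for the general big (possibly non-nef) case asserted in the theorem, which is precisely why the restricted-volume argument above is required.
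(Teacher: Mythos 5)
Your reduction is the right skeleton --- it is in fact the architecture of Takayama's own published proof: identify $\mathrm{NAmp}(K_X)$ with $\mathbf{B}_+(K_X)$, invoke the Ein--Lazarsfeld--Musta\c{t}\u{a}--Nakamaye--Popa description of $\mathbf{B}_+$ by restricted volumes, and argue by contradiction using the Boucksom--Demailly--P\u{a}un--Peternell pseudoeffectivity criterion on a resolution of a non-uniruled component. But as a proof it has a genuine gap, and you concede it yourself: the implication ``$K_{\tilde V}$ pseudoeffective $\Rightarrow \mathrm{vol}_{X|V}(K_X) > 0$'' \emph{is} the theorem, and the blow-up/adjunction computation you sketch cannot deliver it. The discrepancy formula only expresses $p^*K_X|_D$ as $\rho^*K_V$ plus normal-bundle and relative-canonical terms whose signs are uncontrolled --- indeed, for a component of $\mathbf{B}_+(K_X)$ the normal directions are exactly where negativity concentrates --- so pseudoeffectivity of $K_{\tilde V}$ does not propagate to a restricted-volume lower bound by sheaf-theoretic bookkeeping. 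What closes the loop in \cite{Tak08} is an Ohsawa--Takegoshi-type $L^2$ extension theorem for twisted pluricanonical sections from (a resolution of) $V$ to $X$, with metrics constructed from the pseudoeffectivity of $K_{\tilde V}$ and multiplier-ideal comparisons between $K_X|_V$ and $K_{\tilde V}$; your appeal to ``Viehweg-type positivity'' is a placeholder for precisely this step, not an argument. Until it is supplied you have a correct roadmap, not a proof.

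That said, the comparison with the paper is instructive: the paper does not prove this statement at all --- it quotes it from \cite{Tak08} and uses it only as an alternative ending to Step 3. In the situation actually needed there $K_X$ is nef as well as big, and your final paragraph --- semiampleness via the base-point-free theorem, $\mathrm{NAmp}(K_X) = \mathrm{Null}(K_X) = \Exc(\phi)$ by Nakamaye's theorem, and uniruledness of the exceptional components from rational chain connectedness of the fibres of the contraction (Hacon--McKernan, or simply the contracted rational curves produced by the Cone theorem, which is what the paper's main Step 3 argument does) --- is a complete and correct proof of that special case, essentially identical to the paper's primary route. So your proposal suffices for the paper's purposes; it is only the general (big, non-nef) statement of Takayama's theorem for which the technical heart is missing.
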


Again, by Proposition \ref{prop:Curves} and Theorem \ref{thm:Takayama} the canonical line bundle $K_X$ is ample. The proof of Theorem \ref{thm:Main} is now complete.

\section{Concluding remarks}\label{se:Concl}

\subsection{The logarithmic case} Consider a pair $(X, D)$ where $D$ is a normal crossings divisor. To study hyperbolicity of the open variety $X \setminus D$ we can use the techniques of {\em logarithmic} jet bundles and jet differentials as developed in \cite{DL01}. In particular we can consider the {\em log-directed manifold} $(X, D, T_X(D))$ and construct the Demailly -- Semple tower $$ (X_k, D_k, V_k) \to (X_{k-1}, D_{k-1}, V_{k-1}) \to \ldots (X, D, T_X(D)).$$ It is possible to extend Conjecture \ref{conj:Kob} and Question \ref{que:KJet} to the logarithmic setting (see \cite{LZ17} for recent progress towards Conjecture \ref{conj:Kob}). By adapting the arguments in Section \ref{se:Proof} we can prove the following version of our main Theorem \ref{thm:Main}.

\begin{thm}\label{thm:Log} Let $X$ be a smooth projective variety and $D$ a normal crossings divisor on $X$. Suppose that the log-directed manifold $(X, D, T_X(D))$ has a metric with negative total $k$-jet curvature such that the degeneration set $\Sigma_{h_k}$ does not dominate $X$. Then the canonical line bundle $K_X + D$ of the pair $(X, D)$ is big. If. moreover, we have $\Sigma_{h_k} \subset X^{\sing}_k \cup D_k$ and $D$ does not contain rational curves then $K_X + D$ is ample.
\end{thm}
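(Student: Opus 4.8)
The plan is to adapt the three-step structure of the proof of Theorem \ref{thm:Main} to the logarithmic setting, replacing ordinary jet bundles and the sheaf $\Omega^1_X$ by their logarithmic analogues $\Omega^1_X(\log D)$ throughout. First I would establish the logarithmic analogue of Proposition \ref{prop:Curves}: given the negative total $k$-jet curvature of $(X, D, T_X(D))$, for every irreducible curve $C \subset X$ whose normalization lifts outside $\Sigma_{h_k}$ the logarithmic Gau\ss--Bonnet formula yields a lower bound on the logarithmic Euler characteristic $2g(\bar C) - 2 + \#(\nu^{-1}(D))$ in terms of $\varepsilon \deg_\omega(C)$ and the lifting multiplicities. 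Under the hypothesis $\Sigma_{h_k} \subset X^{\sing}_k \cup D_k$ together with the assumption that $D$ contains no rational curves, this forces the absence of rational curves meeting $X \setminus D$ nontrivially, which is the input needed for nefness statements about $K_X + D$ and for the final ampleness argument.

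For the bigness part I would run the analogue of Proposition \ref{prop:Big}. Taking $\mathbf{a} = (0, \ldots, 0, 1)$ so that $\OO_{X_k}(\mathbf{a})$ is big by the total negativity hypothesis, the logarithmic direct image formula (the $\Omega^1_X(\log D)$ version of Proposition \ref{prop:Compact}) identifies sections of $\OO_{X_k}(k \cdot \mathbf{a}) \otimes \pi_{0,k}^* A^{-1}$ with sections of $\Sym^k \EE \otimes A^{-1}$, where $\EE$ is now the logarithmic jet differential sheaf. The Kodaira lemma produces a nonzero section, hence a sheaf injection $A \hookrightarrow \Sym^k \EE$, and the logarithmic version of the filtration in Proposition \ref{prop:Decomp} lets me pass to a nontrivial morphism $A \to \left(\Omega^1_X(\log D)\right)^{\otimes lk}$ for some $l$. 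The logarithmic Campana--P\u{a}un criterion (the $D \neq \emptyset$ case of Theorem \ref{thm:CampanaPaun}) then gives bigness of $K_X + D$, provided $K_X + D$ is pseudoeffective; this pseudoeffectivity follows, as in Remark \ref{rmk:Alter}, from the fact that $\Sigma_{h_k}$ does not dominate $X$, so by the logarithmic Gau\ss--Bonnet estimate $X$ is not covered by rational curves meeting $X \setminus D$ and one invokes \cite{BDPP13}. This already proves the first assertion.

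For the second assertion I would upgrade bigness to ampleness by the standard argument of Step 3. Since $K_X + D$ is big and nef (nefness coming from the curve estimate under the stronger assumption $\Sigma_{h_k} \subset X^{\sing}_k \cup D_k$), the Base Point-Free theorem makes it semiample, giving a birational contraction $c \colon X \to X'$ with $K_X + D = c^* A'$ for $A'$ ample. Each component of $\mathrm{Null}(K_X + D) = \Exc(c)$ is covered by rational curves contracted by $c$; by the logarithmic Proposition \ref{prop:Curves}, combined with the hypothesis that $D$ contains no rational curves, any such rational curve would have to lift into $\Sigma_{h_k}$, which the nondegeneracy assumption excludes. Hence $\mathrm{Null}(K_X + D)$ is empty and Nakai--Moishezon gives ampleness.

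The main obstacle I expect is establishing the logarithmic counterparts of Propositions \ref{prop:Compact} and \ref{prop:Decomp} with the correct bookkeeping: one must verify that the logarithmic Demailly--Semple tower of \cite{DL01} carries a direct image formula identifying $(\pi_{0,k})_* \OO_{X_k}(\mathbf{a})$ with a filtered sheaf whose graded pieces are built from $\Omega^1_X(\log D)$, and that the multiplicity term from the boundary enters the Gau\ss--Bonnet estimate with the right sign. The subtlety is that the condition $\Sigma_{h_k} \subset X^{\sing}_k \cup D_k$ now permits degeneration along the boundary divisor $D_k$, so I must separate curves lying inside $D$ (handled by the hypothesis that $D$ has no rational curves) from curves meeting $X \setminus D$ (handled by the curvature estimate); making this dichotomy precise is the delicate step, whereas the bigness-to-ampleness passage is essentially formal.
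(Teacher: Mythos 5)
Your proposal is the natural fleshing-out of the adaptation that the paper itself only gestures at (the paper offers no proof of Theorem \ref{thm:Log} beyond the phrase ``by adapting the arguments in Section \ref{se:Proof}''), and the bigness mechanism --- logarithmic direct image formula, Kodaira lemma, filtration with graded pieces in $\bigl(\Omega^1_X(\log D)\bigr)^{\otimes lk}$, logarithmic Campana--P\u{a}un --- is the right skeleton. The genuine gap sits precisely in the two places you call ``essentially formal'': the pseudoeffectivity of $K_X + D$ and the passage from big to ample. In the absolute case the estimate of Proposition \ref{prop:Curves} gives $2g(\bar C) - 2 > 0$ for every curve whose lift avoids $\Sigma_{h_k}$, hence there are \emph{no} rational curves at all; that is what feeds both Remark \ref{rmk:Alter} (via \cite{BDPP13}) and Step 3. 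The logarithmic estimate only gives $2g(\bar C) - 2 + \#\nu^{-1}(D) > 0$, which for $g = 0$ merely forces the curve to meet $D$ in at least three points of its normalization. Such rational curves are not excluded and may well cover $X$ (think of $X = \mathbb{P}^1 \times C$ with $g(C) \geqslant 2$ and $D = \{0,1,\infty\} \times C$: here $X$ is uniruled and $K_X$ is \emph{not} pseudoeffective, yet $K_X + D$ is ample, which is the kind of situation the theorem is meant to cover). Consequently your sentence ``$X$ is not covered by rational curves meeting $X \setminus D$ and one invokes \cite{BDPP13}'' does not go through: every rational curve not contained in $D$ meets $X \setminus D$, the estimate does not exclude those with $\#\nu^{-1}(D) \geqslant 3$, and \cite{BDPP13} characterizes pseudoeffectivity of $K_X$ by non-uniruledness --- it says nothing about $K_X + D$. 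A genuinely logarithmic substitute (pseudoeffectivity of $K_X + D$ from the absence of covering families of rational curves meeting $D$ in at most two points, i.e.\ of $\mathbb{A}^1$'s and $\mathbb{C}^*$'s) has to be supplied here.

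The same issue infects your ampleness step. You assert that any rational curve in $\Exc(c)$ ``would have to lift into $\Sigma_{h_k}$, which the nondegeneracy assumption excludes''; this is false for a rational curve $C \not\subset D$ meeting $D$ in at least three points --- its lift avoids $X_k^{\sing} \cup D_k$ generically and satisfies the logarithmic estimate without any contradiction. What you actually need is that every rational curve $C \not\subset D$ satisfies $(K_X + D)\cdot C > 0$, i.e.\ a comparison between $\deg \nu^*(K_X + D)$ and the quantity $2g(\bar C) - 2 + \#\nu^{-1}(D)$ that the Gau\ss--Bonnet estimate controls (using $\deg \nu^* D \geqslant \#\nu^{-1}(D)$ together with a bound on $\deg \nu^* K_X$ in terms of $\deg K_{\bar C}$ and the lifting multiplicities). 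Establishing this comparison, and the corresponding nefness of $K_X + D$ via the cone theorem, is the non-formal content of the logarithmic statement; your dichotomy ``curves inside $D$ versus curves meeting $X \setminus D$'' is the right organizing principle, but the second branch is not disposed of by the curvature estimate alone.
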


\subsection{The case of a singular directed variety} Consider now the general case of a directed variety $(X, V)$ where $V \subset T_X$ is a holomorphic subbundle outside of a codimension $\geqslant 2$ analytic subvariety $\Sing(V)$. In this case the canonical sheaf can be defined by $K_V = i_*\det i^*(\OO(V^*))$ for $i \colon X\setminus \Sing(V) \to X$ an inclusion. This definition is standard in foliation theory. On the other hand, a more refined notion of the canonical sheaf $\mathscr{K}_V$ was introduced by Demailly (see e.g. \cite{Dem15}, Definition 2.10). This version of the canonical sheaf better reflects the impact that the singularities of $V$ have on the geometry of $(X, V)$. A possible generalization of Question \ref{que:KJet} is the following.

\begin{que} Suppose that $r = \mathrm{rk}(V) \geqslant 2$ and $(X, V)$ has a metric with nondegenerate negative (total) $k$-jet curvature. Is the canonical sheaf $\mathscr{K}_V$ big in the sense of Demailly?
\end{que}

We hope to be able to address this question in a forthcoming paper.

\end{document}